\newtheorem{thm}{Theorem}[section]
\newtheorem{theorem}[thm]{Theorem}
\newtheorem{lemma}[thm]{Lemma}
\newtheorem{assumption}[thm]{Assumption}
\newtheorem{proposition}[thm]{Proposition}
\theoremstyle{remark}
\newtheorem{remark}[thm]{Remark}
\numberwithin{equation}{section}
\newcommand{\norm}[1]{\left\Vert#1\right\Vert}
\newcommand{\tnorm}{\@ifstar\@tnorms\@tnorm}
\newcommand{\@tnorms}[1]{%
\left|\mkern-2.5mu\left|\mkern-2.5mu\left|
#1
\right|\mkern-2.5mu\right|\mkern-2.5mu\right|
}
\newcommand{\@tnorm}[2][]{%
\mathopen{#1|\mkern-2.5mu#1|\mkern-2.5mu#1|}
#2
\mathclose{#1|\mkern-2.5mu#1|\mkern-2.5mu#1|}
}
\newcommand{\jump}[2]{\lbrack\hspace{-1.5pt}\lbrack {#1} 
\rbrack\hspace{-1.5pt}\rbrack_{\raisebox{-2pt}{\scriptsize$#2$}} }
\newcommand{\oversetc}[2]{%
\mathrel{\vbox{\offinterlineskip\ialign{%
\hfil##\hfil\cr
$\scriptstyle #1$\cr
\noalign{\kern 0.2ex}
$#2$\cr
}}}}
\title[VEM flux recovery on quadtree]{A simple virtual element-based flux recovery on quadtree}
\author{Shuhao Cao}
\address{Department of Mathematics and Statistics, Washington University in St. Louis, 
   St. Louis, MO 63105, USA}
\email{s.cao@wustl.edu}
\thanks{This work was supported in part by the National Science Foundation
under grants DMS-1913080 and DMS-2136075, and no additional revenues are related to this work.}
\date{}
\keywords{virtual element, flux recovery, adaptive mesh refinement, quadtree, a posteriori error estimation}
\subjclass{65N15, 65N30, 65N50}
\begin{document}
    
\begin{abstract}
In this paper, we introduce a simple local flux recovery for $\mathcal{Q}_k$ finite element of a scalar coefficient diffusion equation on quadtree meshes, with no restriction on the irregularities of hanging nodes. The construction requires no specific ad hoc tweaking for hanging nodes on $l$-irregular ($l\geq 2$) meshes thanks to the adoption of virtual element families. The rectangular elements with hanging nodes are treated as polygons as in the flux recovery context. An efficient \emph{a posteriori} error estimator is then constructed based on the recovered flux, and its reliability is proved under common assumptions, both of which are further verified in numerics.
\end{abstract}
\maketitle

\section{Introduction}
In this paper, we consider the following diffusion equation on $\Omega\subset \mathbb{R}^2$,
\begin{equation}
\label{eq:model}
\left\{
\begin{aligned}
-\nabla \cdot (\alpha \nabla u)  &= f, \quad \text{ in } \Omega,
\\
u &= 0, \quad \text{ on } \partial \Omega.
\end{aligned} 
\right.
\end{equation}

To approximate \eqref{eq:model}, taking advantage of the adaptive mesh refinement (AMR) to save valuable computational resources, the adaptive finite element method on quadtree mesh is among the most popular ones in the engineering and scientific computing community 
\cite{Demkowicz;Oden;Rachowicz;Hardy:1989Toward}. 
Compared with simplicial meshes, quadtree meshes provide preferable performance in the aspects of the accuracy and robustness. There are lots of mature software packages (e.g., \cite{mfem,Bangerth;Hartmann;Kanschat:2007deal.II}) on quadtree meshes. To guide the AMR, one possible way is through the \emph{a posteriori} error estimation to construct computable quantities to indicate the location that the mesh needs to be refined/coarsened, thus to balance the spacial distribution of the error which improves the accuracy per computing power. Residual-based and recovery-based error estimators are among the most popular ones used. In terms of accuracy, the recovery-based error estimator shows more appealing attributes 
\cite{Zienkiewicz;Zhu:1992superconvergentpart1,Bank.Xu:2003Asymptotically}. 

More recently, newer developments on flux recovery have been studied by many researchers on constructing a post-processed flux in a structure-preserving approximation space. 
Using \eqref{eq:model} as an example, given that the data $f\in L^2(\Omega)$, the flux $-\alpha \nabla u$ is in $\bm{H}(\mathrm{div}):= \{\bm{v}\in \bm{L}^2(\Omega): \nabla \cdot \bm{v}\in L^2(\Omega)\}$, which has less continuity constraint than the ones in 
\cite{Zienkiewicz;Zhu:1992superconvergentpart1,Bank.Xu:2003Asymptotically} which are vertex-patch based with the recovered flux being $H^1(\Omega)$-conforming. The $\bm{H}(\mathrm{div})$-flux recovery shows more robustness than vertex-patch based ones (e.g., \cite{Cai;Zhang:2009Recovery-based,Cai;Cao:2015recovery-based}).

However, these $\bm{H}(\mathrm{div})$-flux recovery techniques work mainly on conforming meshes. For nonconforming discretizations on nonmatching grids, some simple treatment of hanging nodes exists by recovering the flux on a conforming mother mesh \cite{Ern;Vohralik:2009reconstruction}. To our best knowledge, there is no literature about the local $\bm{H}(\mathrm{div})$-flux recovery on a multilevel irregular quadtree meshes. One major difficulty is that it is impossible to recover a robust computable polynomial flux to satisfy the $\bm{H}(\mathrm{div})$-continuity constraint, that is, the flux is continuous in the normal direction on edges with hanging nodes.

More recently, a new class of methods called the virtual element methods (VEM) were introduced in \cite{Beirao-da-Veiga;Brezzi;Cangiani;Manzini:2013principles,Brezzi;Falk;Marini:2014principles}, which can be viewed as a polytopal generalization of the tensorial/simplicial finite element. Since then, lots of applications of VEM have been studied by many researchers. A usual VEM workflow splits the consistency (approximation) and the stability of the method as well as the finite dimensional approximation space into two parts. It allows flexible constructions of spaces to preserve the structure of the continuous problems such as higher order continuities, exact divergence-free spaces, and many others. The VEM functions are represented by merely the degrees of freedom (DoF) functionals, not the pointwise values. In computation, if an optimal order discontinuous approximation can be computed elementwisely, then adding an appropriate parameter-free stabilization suffices to guarantee the convergence under common assumptions on the geometry of the mesh.

The adoption of the polytopal element brings many distinctive advantages, for example, treating rectangular element with hanging nodes as polygons allows a simple construction of $\bm{H}(\mathrm{div})$-conforming finite dimensional approximation space on meshes with multilevel irregularities.
We shall follow this approach to perform the flux recovery for a conforming $\mathcal{Q}_k$ discretization of problem \eqref{eq:model}. Recently, arbitrary level of irregular quadtree meshes have been studied in \cite{Di-Stolfo;Schroder;Zander;Kollmannsberger:2016treatment,Solin;Cerveny;Dolezel:2008Arbitrary-level,Cerveny;Dobrev;Kolev:2019Nonconforming}. Analyses of the residual-based error estimator on 1-irregular (balanced) quadtree mesh can be found, e.g., in \cite{Carstensen;Hu:2009hanging}. 
In the virtual element context, Zienkiewicz-Zhu (ZZ)-type recovery techniques are studied for linear elasticity in \cite{Chi;Beirao-da-Veiga;Paulino:2019simple}, and for diffusion problems in \cite{Guo;Xie;Zhao:2019Superconvergent}. In \cite{Chi;Beirao-da-Veiga;Paulino:2019simple,Guo;Xie;Zhao:2019Superconvergent}, the recovered flux is in $\bm{H}^1$ and associated with nodal DoFs, thus cannot yield a robust estimate when the diffusion coefficient has a sharp contrast \cite{Cai;Zhang:2009Recovery-based,Cai;Cao:2015recovery-based}. 
The first equilibrated flux recovery in $\bm{H}(\mathrm{div})$ for virtual element methods is studied in \cite{Dassi;Gedicke;Mascotto:2021adaptive}.  While \cite{Dassi;Gedicke;Mascotto:2021adaptive} recovers a flux by solving a mixed problem globally, we opt for a cheap and simple weighted averaging locally. 

The major ingredient in our study is an $\bm{H}(\mathrm{div})$-conforming virtual element space modified from the ones used in \cite{Brezzi;Falk;Marini:2014principles,Beirao-da-Veiga;Brezzi;Marini;Russo:2017Serendipity} (Section \ref{sec:vem}). 
Afterwards, an $\bm{H}(\mathrm{div})$-conforming flux is recovered by a robust weighted averaging of the numerical flux, in which some unique properties of the tensor-product type element $\mathcal{Q}_k$ are exploited (Section \ref{sec:recovery}). 
The \emph{a posteriori} error estimator is constructed based on the projected flux elementwisely. The efficiency of the local error indicator is then proved by bounding it above by the residual-based error indicator (Section \ref{sec:efficiency}). The reliability of the recovery-based error estimator is then shown under certain assumptions (Section \ref{sec:reliability}). These estimates are verified numerically by some common AMR benchmark problems implemented in a publicly available finite element software library $i$FEM \cite{Chen:2008innovative} (Section \ref{sec:numerics}).

\section{Preliminaries}
\subsection{Discretization and notations}
If $\Omega$ is not a rectangle, $u$ is extended by 0 to an $\widetilde{\Omega}$ that is rectangular, therefore without loss of generality, we assume $\Omega$ is partitioned into a shape-regular $\mathcal{T} = \{K\}$ with rectangular elements, and $\alpha := \alpha_K$ is assumed to be a piecewise, positive constant with respect to $\mathcal{T}$.
The weak form of problem \eqref{eq:model} is then discretized in a tensor-product finite element space as follows, 
\begin{equation}
\label{eq:model-fem}
(\alpha \nabla u_{\mathcal{T}}, \nabla v_{\mathcal{T}}) = (f, v_{\mathcal{T}}),\quad \forall v_{\mathcal{T}}\in \mathcal{Q}_k(\mathcal{T})\cap H_0^1(\Omega),
\end{equation}
in which the standard notation is opted. $(\cdot,\cdot)_D$ denotes the inner product on $L^2(D)$, and 
$\norm{\cdot}_D:=\sqrt{(\cdot,\cdot)_D}$, with the subscript omitted when $D= 
\Omega$. The discretization space is
\[
\mathcal{Q}_k(\mathcal{T}) := \{v\in H^1(\Omega): v|_K\in \mathbb{Q}_k(K), \;\forall K\in \mathcal{T} \}.
\]
and on $K = [a,b]\times [c,d]$
\[
\mathbb{Q}_k(K) := \mathbb{P}_{k,k}(K) = \big\{p(x)q(y), \;
p\in \mathbb{P}_k([a,b]), q\in \mathbb{P}_k([c,d]) \big\},
\]
where $\mathbb{P}_k(D)$ stands for the degree no more than $k$ polynomial defined on $D$.
Henceforth, we shall simply denote $\mathcal{Q}_k(\mathcal{T}) =: \mathcal{Q}_k$ when no ambiguity arises.

On $K$, the sets of 4 vertices, as well as 4 edges of the same generation with $K$, are denoted by $\mathcal{N}_K$ and $\mathcal{E}_K$, respectively. The sets of nodes and edges in $\mathcal{T}$ are denoted by 
$\mathcal{N}:= \bigcup_{K\in \mathcal{T}} \mathcal{N}_K$ and $\mathcal{E}:= \bigcup_{K\in \mathcal{T}} \mathcal{E}_K$. 
A node $\bm{z} \in  \mathcal{N}$ is called a hanging node if it is on $\partial K$ but is not counted as a vertex of $K\in \mathcal{T}$, and we denote the set of hanging nodes as $\mathcal{N}_H$
\begin{equation}
\label{eq:node-hanging}
\mathcal{N}_H := \{\bm{z}\in \mathcal{N}: \exists K\in \mathcal{T}, \bm{z} \in \partial K \backslash \mathcal{N}_K \}
\end{equation}
Otherwise the node $\bm{z} \in  \mathcal{N}$ is a regular node. If an edge  $e\in \mathcal{E}$ contains at most $l$ hanging nodes, the partition $\mathcal{T}$, as well as the element these hanging nodes lie on, is called $l$-irregular. 

For each edge $e\in \mathcal{E}$, a unit normal vector $\bm{n}_e$ is fixed by specifying its direction pointing rightward for vertical edges, and upward for horizontal edges. If an exterior normal of an element on this edge shares the same orientation with $\bm{n}_e$, then this element is denoted by $K_-$, otherwise it is denoted by $K_+$, i.e., $\bm{n}_e$ is pointing from $K_-$ to $K_+$. The intersection of the closures of $K_+, K_-$ is always an edge $e\in \mathcal{E}$. 
However, we note that by the definition in \eqref{eq:node-hanging} it is possible that $e\in\mathcal{E}_{K_+}$ but not in $\mathcal{E}_{K_-}$ or vice versa, if there exists a hanging node on $e$ (see e.g., Figure \ref{fig:node-hanging}).
For any function or distribution $v$ well-defined on the two elements, define $\jump{v}{e} = v^- - v^+$ on an edge 
$e\not\subset\partial \Omega$, in which $v^-$ and $v^+$ are defined in the limiting sense $v^{\pm} = \lim_{\epsilon\to 
0^{\pm}}v(\bm{x}+\epsilon \bm{n}_e)$ for $\bm{x}\in e$. If $e$ is a boundary edge, 
the function $v$ is extended by zero outside the domain to compute $\jump{v}{e}$. 
Furthermore, the following notation denotes a weighted average of $v$ on edge $e$ for a weight $\gamma\in [0,1]$,
\[
    \{v\}^{\gamma}_e := \gamma v^- + (1-\gamma) v^+.
\]
\subsection{Virtual element spaces}
\label{sec:vem}
\begin{figure}[h]
\centering
\includegraphics[width=2in]{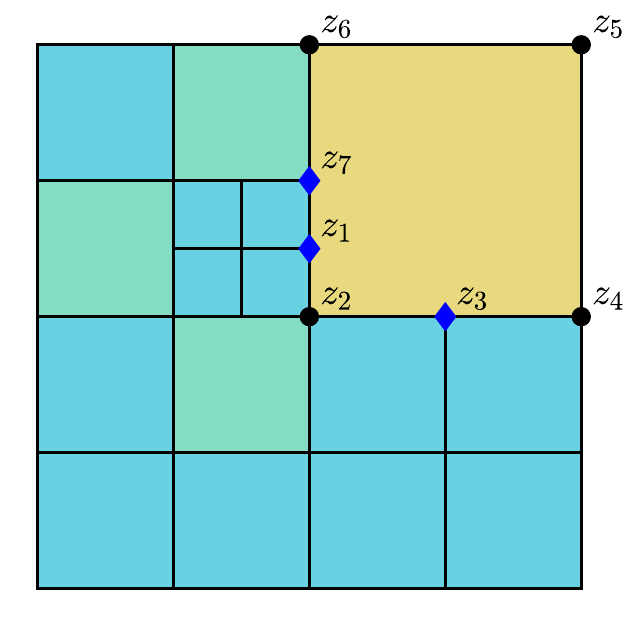}
\caption{For the upper right element $K\in \mathcal{T}$, $\mathcal{N}_K =\{z_2, z_4, z_5, z_6\}$. For $K\in \mathcal{T}_{\mathrm{poly}}$, $\mathcal{N}_K =\{z_i\}_{i=1}^7$ }
\label{fig:node-hanging}
\end{figure}

In this subsection, the quadtree mesh $\mathcal{T}$ of interest is embedded into a polygonal mesh 
$\mathcal{T} \hookrightarrow \mathcal{T}_{\mathrm{poly}}=\{K_{\mathrm{poly}}\}$. On any given quadrilateral element $K$, for example we consider a $v_{\mathcal{T}} \in \mathbb{Q}_1(K)$, it has 4 degrees of freedom associated with 4 nodes $\{z\}$. Its numerical flux $-\alpha \nabla v_{\mathcal{T}} \cdot \bm{n}$ is well-defined on the 4 edges $\{e\}$ locally on $K$, such that on each edge it is a polynomial defined on the whole edge, regardless of the number of hanging nodes on that edge. 
Using Figure \ref{fig:node-hanging} as an example, on the upper right element $K$,
$\nabla v_{\mathcal{T}}|_{K}\cdot \bm{n}|_{\scriptsize{\oversetc{\rightharpoonup}{z_2 z_6}}}\in \mathbb{P}_1(\oversetc{\rightharpoonup}{z_2 z_6})$ is a linear function in $y$-variable. 

For the embedded element $K_{\mathrm{poly}} \in \mathcal{T}_{\mathrm{poly}}$, which geometrically coincides with $K$, it includes all the hanging nodes, while the set of edges are formed accordingly as the edges of the cyclic graph of the vertices. We shall denote the set of all edges on $\mathcal{T}_{\mathrm{poly}}$ as $\mathcal{E}_{\mathrm{poly}}$. Using Figure \ref{fig:node-hanging} as example, it is possible to define a flux on $K$ with piecewise linear normal component on $\oversetc{\rightharpoonup}{z_2 z_6}$ which now consists of three edges on $\partial K_{\mathrm{poly}}$.

Subsequently, $K_{\mathrm{poly}}\in \mathcal{T}_{\mathrm{poly}}$ shall be denoted by simply 
$K\in \mathcal{T}_{\mathrm{poly}}$ in the context of flux recovery, and the notion $e\subset \partial K$ denotes an edge on the boundary of $K$, which takes into account of the edges formed with one end point or both end points as the hanging nodes.

On $\mathcal{T}_{\mathrm{poly}}$, we consider the following Brezzi-Douglas-Marini-type virtual element modification inspired by the ones used in  
\cite{Brezzi;Falk;Marini:2014principles,Beirao-da-Veiga;Brezzi;Marini;Russo:2017Serendipity}.
The local space on a $K\in \mathcal{T}_{\mathrm{poly}}$ is defined as for $k\geq 1$
\begin{equation}
\label{eq:space-vhdiv}
\begin{aligned}
\mathcal{V}_{k}(K) :=  \Big\{ & \bm{\tau}\in \bm{H}(\mathrm{div};K)\cap 
\bm{H}(\mathbf{rot};K):  \\
& \nabla\cdot \bm{\tau}\in \mathbb{P}_{k-1}(K), \quad \nabla \times \bm{\tau} = 0,
\\
& \bm{\tau}\cdot\bm{n}_e \in \mathbb{P}_{k}(e), \;\forall e\subset \partial K
\Big\}.
\end{aligned} 
\end{equation}
An $\bm{H}(\mathrm{div})$-conforming global space for recovering the flux is then 
\begin{equation}
\mathcal{V}_k := \bigl\{\bm{\tau}\in \bm{H}(\mathrm{div}): \bm{\tau}|_K \in \mathcal{V}_{k}(K), 
\;\text{ on } K\in \mathcal{T}_{\mathrm{poly}} \bigr\}.
\end{equation}
Next we turn to define the degrees of freedom (DoFs) of this space. To this end, we define the set of scaled monomials $\mathbb{P}_{k}(e)$ on an edge $e$. $e$ is parametrized by $[0,h_e]\ni s\mapsto \bm{a} + s\bm{t}_e$, where $\bm{a}$ is the starting point of $e$, and $\bm{t}_e$ is the unit tangential vector of $e$. The basis set for $\mathbb{P}_{k}(e)$ is chosen as: 
\begin{equation}
\label{eq:monomials-edge}
\mathbb{P}_{k}(e):=\operatorname{span}\left\{1, \frac{s-m_{e}}{h_{e}},\left(\frac{s-m_{e}}{h_{e}}\right)^{2}, \ldots,\left(\frac{s-m_{e}}{h_{e}}\right)^{k}\right\},
\end{equation}
where $m_e = h_e/2$ representing the midpoint when using this parametrization. Similar to the edge case, $\mathbb{P}_{k}({K})$'s basis set is chosen as follows (see e.g., \cite{Beirao-da-Veiga;Brezzi;Cangiani;Manzini:2013principles}): 
\begin{equation}
\label{eq:monomials-elem}
\mathbb{P}_{k}({K}):=\operatorname{span}\left\{m_{\alpha}(\bm{x}):=\left(\frac{\bm{x}-\bm{x}_{K}}{h_{K}}\right)^{\bm{\alpha}}, \quad|\bm{\alpha}| \leq k\right\}.
\end{equation}
The degrees of freedom (DoFs) are then set as follows for a $\bm{\tau}\in \mathcal{V}_{k}$:
\begin{equation}
\label{eq:space-dof}
\begin{aligned}
(\mathfrak{e})\; k\geq 1  & \quad  \int_e (\bm{\tau}\cdot \bm{n}_e) m \,\mathrm{d} s, \quad  
\forall m \in \mathbb{P}_{k}(e), & \text{on } \; e\subset \mathcal{E}_{\mathrm{poly}}.
\\
(\mathfrak{i})\; k\geq 2 & \quad  \int_K \bm{\tau}\cdot \nabla m\, \mathrm{d} \bm{x} , \quad  
\forall m\in \mathbb{P}_{k-1}({K})/\mathbb{R}   & \text{on } \; K\in \mathcal{T}_{\mathrm{poly}}.
\end{aligned}
\end{equation}

\begin{remark}
We note that in our construction, the degrees of freedom to determine the curl of a VEM function originally in \cite{Brezzi;Falk;Marini:2014principles} are replaced by a curl-free constraint thanks to the flexibility to virtual element. The reason why we opt for this subspace is that the true flux $-\alpha \nabla u$ is locally curl-free since we have assumed that $\alpha$ is a piecewise constant.
The unisolvency of the set of DoFs \eqref{eq:space-dof} including the curl-part can be found in \cite{Brezzi;Falk;Marini:2014principles}. While for the modified space \eqref{eq:space-vhdiv}, a simplified argument is in the proof of Lemma \ref{lem:normeq-V}. 
\end{remark}

\section{Flux recovery}
\label{sec:recovery}

As the data $f\in L^2(\Omega)$, the true flux $\bm{\sigma} = -\alpha\nabla u\in \bm{H}(\mathrm{div})$. Consequently, we shall seek a postprocessed flux $\bm{\sigma}_{\mathcal{T}}$ in $\mathcal{V}_{k}\subset \bm{H}(\mathrm{div})$ by specifying the DoFs in \eqref{eq:space-dof}. Throughout this section, whenever considering an element $K\in \mathcal{T}$, we treat it a polygon as 
$K\in \mathcal{T}_{\mathrm{poly}}$.

\subsection{Virtual element-based flux recovery}
Consider $-\alpha_K \nabla u_{\mathcal{T}}$ which is the numerical flux on $K$. We note that 
$-\alpha_K \nabla u_{\mathcal{T}}|_K \in \mathbb{P}_{k-1,k}(K) \times\mathbb{P}_{k,k-1}(K)$. The normal flux on each edge $e\in \mathcal{E}_{\mathrm{poly}}$ is in $\mathbb{P}_{k}(e)$ as $n_e = (\pm 1, 0)$ and $x=\mathrm{const}$ on vertical edges, $n_e = (0, \pm 1)$ and $y=\mathrm{const}$ on horizontal edges. Therefore, the edge-based DoFs can be computed by a simple averaging thanks to the matching polynomial degrees of the numerical flux to the functions in $\mathcal{V}_k$. 

On each $e = \partial K_+ \cap \partial K_-$, define
\begin{equation}
\label{eq:sigma-w}
\left\{-\alpha \nabla u_{\mathcal{T}} \right\}^{\gamma_e}_e \cdot\bm{n}_e
:= \Big(\gamma_e \left( -\alpha_{K_-} \nabla u_{\mathcal{T}}|_{K_-} \right)
+ (1-\gamma_e) \left( -\alpha_{K_+} \nabla u_{\mathcal{T}}|_{K_+} \right)\Big)\cdot \bm{n}_e,
\end{equation}
where 
\begin{equation}
    \gamma_e := \frac{\alpha_{K_+}^{1/2}}{\alpha_{K_+}^{1/2} + \alpha_{K_-}^{1/2}}.
\end{equation}
First for both $k=1$ and $k\geq 2$ cases, 
we set the normal component of the recovered flux is set as 
\begin{equation}
\label{eq:sigma-n}
 \bm{\sigma}_{\mathcal{T}}\cdot \bm{n}_e = \left\{-\alpha \nabla u_{\mathcal{T}} \right\}^{\gamma_e}_e \cdot\bm{n}_e.   
\end{equation}
In the lowest order case $k= 1$, $\nabla \cdot \bm{\sigma}_{\mathcal{T}}$ is a constant on $K$ by \eqref{eq:space-vhdiv}, thus the construction \eqref{eq:sigma-n} alone, which consists the edge DoFs $(\mathfrak{e})$ in \eqref{eq:space-dof},  can determine the divergence $\nabla \cdot \bm{\sigma}_{\mathcal{T}}$ in $K$ as follows
\begin{equation}
\label{eq:sigma-div1}
|K|\nabla\cdot \bm{\sigma}_{\mathcal{T}} = \int_K \nabla\cdot\bm{\sigma}_{\mathcal{T}} \mathrm{d} \bm{x} = 
\int_{\partial K} \bm{\sigma}_{\mathcal{T}} \cdot\bm{n}_{\partial K}\mathrm{d} s
= \sum_{e\subset \partial K}  \int_e \bm{\sigma}_{\mathcal{T}} \cdot\bm{n}_{\partial K}|_e \mathrm{d} s.   
\end{equation}

If $k\geq 2$, after the normal component \eqref{eq:sigma-n} is set, furthermore on each $K$, denote $\Pi_{k-1}$ stands for the $L^2$-projection to $\mathbb{P}_{k-1}(K)$, and we let
\begin{equation}
\label{eq:sigma-div}
\nabla \cdot \bm{\sigma}_{\mathcal{T}} = \Pi_{k-1} f + c_K.
\end{equation}
The reason to add $c_K$ is that we have set the normal components of the recovered flux first without relying on the divergence information. While in general $\nabla \cdot \bm{\sigma}_{\mathcal{T}} \neq \Pi_{k-1} f$ as otherwise the divergence theorem will be rendered invalid in \eqref{eq:sigma-div1}. 
As a result, an element-wise constant $c_K$ is added to ensure the compatibility of $\bm{\sigma}_{\mathcal{T}}$ locally on each $K$. It is straightforward to verify that $c_K$ has the following form, and later we shall show that $c_K$ does not affect the efficiency as well as the reliability of the error estimates.
\begin{equation}
\label{eq:sigma-c}
 c_K = \frac{1}{ |K| }\left(-\int_K \Pi_{k-1} f \mathrm{d} \bm{x} 
+ \sum_{e\subset\partial K} \int_e \left\{-\alpha \nabla u_{\mathcal{T}} \right\}^{\gamma_e}_e 
\cdot\bm{n}_{\partial K}|_e  \mathrm{d} s\right),   
\end{equation}
Consequently for $k\geq 2$, the set $(\mathfrak{i})$ of DoFs can be set as: $\forall q\in \mathbb{P}_{k-1}(K)$
\begin{equation}
\label{eq:sigma-i}
\bigl(\bm{\sigma}_{\mathcal{T}},\nabla q\bigr)_K
= -\left(\Pi_{k-1} f + c_K, q\right)_K + \sum_{e\subset \partial K}
\left(\left\{-\alpha \nabla u_{\mathcal{T}} \right\}^{\gamma_e}_e \cdot\bm{n}_{\partial K}|_e,q\right)_{e}.
\end{equation}

\subsection{Locally projected flux}
To the end of constructing a computable local error indicator, inspired by the VEM formulation \cite{Brezzi;Falk;Marini:2014principles}, the recovered flux is projected to a space with a much simpler structure. A local oblique projection ${\Pi}: \bm{L}^2(K) \to \nabla \mathbb{P}_k(K), \; \bm{\tau}\mapsto {\Pi} \bm{\tau}$ is defined as follows:
\begin{equation}
\label{eq:sigma-proj}
\bigl({\Pi} \bm{\tau},\nabla p\bigr)_K = \bigl(\bm{\tau},\nabla p\bigr)_K,\quad \forall p\in \mathbb{P}_k(K)/\mathbb{R}.
\end{equation}
Next we are gonna show that this projection operator can be straightforward computed for vector fields in $\mathcal{V}_k(K)$.
\subsubsection{$k=1$}
When $k = 1$, we can compute the right hand side of \eqref{eq:sigma-proj} as follows:
\begin{equation}
\label{eq:sigma-proj-linear}
\bigl(\bm{\tau},\nabla p\bigr)_K
= -\bigl(\nabla \cdot \bm{\tau}, p\bigr)_K + \bigl(\bm{\tau}\cdot\bm{n},p\bigr)_{\partial K}.
\end{equation}
By definition of the space \eqref{eq:space-vhdiv} when $k=1$, $\nabla \cdot \bm{\tau}$ is a constant on $K$ and can be determined by edge DoFs $(\mathfrak{e})$ in \eqref{eq:space-dof} similar to \eqref{eq:sigma-div1}.
Moreover, $p|_e\in \mathbb{P}_1(e)$, thus the boundary term can be evaluated using 
DoFs $(\mathfrak{e})$ in \eqref{eq:space-dof}.

\subsubsection{$k\geq 2$}
When $k\geq 2$, the right hand side of \eqref{eq:sigma-proj} can be evaluated following a similar procedure as \eqref{eq:sigma-proj-linear}, if we exploit the fact that $\nabla \cdot \bm{\tau}\in \mathbb{P}_{k-1}(K)$, 
we have
\begin{equation}
\label{eq:sigma-proj-2}
\begin{aligned}
\bigl(\bm{\tau},\nabla p\bigr)_K
& = -\bigl(\nabla \cdot \bm{\tau}, \Pi_{k-1} p\bigr)_K + \bigl(\bm{\tau}\cdot\bm{n},p\bigr)_{\partial K}
\\
& = \bigl(\bm{\tau}, \nabla \Pi_{k-1} p\bigr)_K + \bigl(\bm{\tau}\cdot\bm{n},p - \Pi_{k-1} p\bigr)_{\partial K},
\end{aligned}
\end{equation}
which can be evaluated using both DoF sets $(\mathfrak{e})$ and $(\mathfrak{i})$.

\section{A posteriori error estimation}

Given the recovered flux \(\bm{\sigma}_{\mathcal{T}}\) in Section \ref{sec:recovery}, the recovery-based local error indicator $\eta_{\mathrm{flux},K}$ and the element residual $\eta_{\mathrm{res},K}$ as follows:
\begin{equation}
\begin{gathered}  
\eta_{\mathrm{flux},K} := \big\Vert{\alpha^{-1/2}(\bm{\sigma}_{\mathcal{T}} + \alpha \nabla u_{\mathcal{T}})} \big\Vert_K, 
\\
\text{ and } \;
\eta_{\mathrm{res},K} := \big\Vert{\alpha^{-1/2}(f - \nabla\cdot\bm{\sigma}_{\mathcal{T}}) } \big\Vert_K,
\end{gathered}
\end{equation}
then 
\begin{equation}
\eta_K = 
\left\{
\begin{array}{lc}
\eta_{\mathrm{flux},K} & \text{when }  k = 1,
\\
\left(\eta_{\mathrm{flux},K}^2 + \eta_{\mathrm{res},K}^2 \right)^{1/2}
& \text{when } k\geq 2. 
\end{array}
\right.
\end{equation}

A computable $\widehat{\eta}_{\mathrm{flux},K}$ is defined as:
\begin{equation}
\label{eq:eta-flux}
\widehat{\eta}_{\mathrm{flux},K}:=\big\Vert{\alpha_K^{-1/2}{\Pi}(\bm{\sigma}_{\mathcal{T}} + \alpha_K \nabla u_{\mathcal{T}})} \big\Vert_K,
\end{equation}
with the oblique projection ${\Pi}$ defined in \eqref{eq:sigma-proj}. The stabilization part $\widehat{\eta}_{\mathrm{stab},K}$ is
\begin{equation}
\label{eq:eta-stab}
\widehat{\eta}_{\mathrm{stab},K}:=\big\vert{\alpha_K^{-1/2}(\operatorname{I}-{\Pi})(\bm{\sigma}_{\mathcal{T}} + \alpha_K \nabla u_{\mathcal{T}})} \big\vert_{S,K}.
\end{equation}
Here $|{\cdot}|_{S,K}:= \big(S_K(\cdot, \cdot)\big)^{1/2}$ is seminorm induced by the following stabilization
\begin{equation}
\label{eq:norm-stab}
S_K(\bm{v}, \bm{w}):=\sum_{e\subset \partial K} 
 h_e\big( \bm{v}\cdot\bm{n}_e,  \bm{w}\cdot \bm{n}_e \big)_e
+ \sum_{\alpha\in \Lambda} (\bm{v},\nabla m_{\alpha})_K (\bm{w},\nabla m_{\alpha})_K,
\end{equation}
where $\Lambda$ is the index set for the monomial basis of $\mathbb{P}_{k-1}(K)/\mathbb{R}$ with cardinality $k(k+1)/2 - 1$, i.e., the second term in \eqref{eq:norm-stab} is dropped in the $k=1$ case. We note that this is a slightly modified version of the standard stabilization for an $\bm{H}(\mathrm{div})$-function in \cite{Brezzi;Falk;Marini:2014principles} as we have replaced the edge DoFs by an integral. In Section \ref{sec:appendix-norm} it is shown that the integral-based stabilization still yields the crucial norm equivalence result.

The computable error estimator $\widehat{\eta}$ is then 
\begin{equation}
\label{eq:eta-computable}
\widehat{\eta}^2 = \begin{cases} 
\sum_{K\in \mathcal{T}} \left(\widehat{\eta}_{\mathrm{flux},K}^2 + \widehat{\eta}_{\mathrm{stab},K}^2 \right) =: \sum_{K\in \mathcal{T}} \widehat{\eta}_{K}^2
& \text{when }  k = 1,
\\[5pt]
\sum_{K\in \mathcal{T}} \left(\widehat{\eta}_{\mathrm{flux},K}^2 + \widehat{\eta}_{\mathrm{stab},K}^2 
+ \eta_{\mathrm{res},K}^2\right)=: \sum_{K\in \mathcal{T}} \widehat{\eta}_{K}^2 & \text{when } k\geq 2.
\end{cases}
\end{equation}

\subsection{Efficiency}
\label{sec:efficiency}
In this section, we shall prove the proposed recovery-based estimator $\widehat{\eta}_{K}$ is efficient by bounding it above by the residual-based error estimator. In the process of adaptive mesh refinement, only the computable $\widehat{\eta}_{K}$ is used as the local error indicator to guide a marking strategy of choice. 
\begin{theorem}
\label{thm:efficiency}
Let \(u_{\mathcal{T}}\) be the solution to problem \eqref{eq:model-fem}, and $\widehat{\eta}_{\mathrm{flux},K}$ be the error indicator in \eqref{eq:eta-computable}. On $K\in \mathcal{T}_{\mathrm{poly}}$, 
$\widehat{\eta}_{\mathrm{flux},K}$ can be locally bounded by the residual-based ones:
\begin{equation}
\label{eq:efficiency}
\widehat{\eta}_{\mathrm{flux},K}^2 \lesssim \mathrm{osc}(f; K)^2  + \eta_{\mathrm{elem},K}^2  +  \eta_{\mathrm{edge},K}^2 ,
\end{equation}
where
\begin{align*}
\mathrm{osc}(f;K) &= \alpha_K^{-1/2}h_K \big\Vert f- \Pi_{k-1} f  \big\Vert_K,
\\
\eta_{\mathrm{elem},K} &:=  \alpha_K^{-1/2}h_K \big\Vert f + \nabla\cdot(\alpha \nabla u_{\mathcal{T}}) \big\Vert_K,
\\
\text{and }\; \eta_{\mathrm{edge},K} &:=  \left(\sum_{e\subset \partial K} \frac{h_e}{\alpha_K + \alpha_{K_e}} 
\big\Vert \jump{\alpha \nabla u_{\mathcal{T}}\cdot\bm{n}_e}{} \big\Vert_e^2\right)^{1/2}.
\end{align*}
In the edge jump term, $K_e$ is the element on the opposite side of $K$ with respect to an edge $e\subset \partial K$. The constant depends on $k$ and the number of edges on $\partial K$.
\end{theorem}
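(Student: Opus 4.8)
The plan is to estimate $\widehat{\eta}_{f,K} = \|\beta_K^{-1/2}\Pi(\sigmah + \beta_K\nabla\uh)\|_K$ by working directly from the characterization \eqref{eq:sigma-proj} of the oblique projection $\Pi$. Since $\Pi(\sigmah + \beta_K\nabla\uh) \in \nabla\mathbb{P}_k(K)$, writing $\bm{\rho} := \Pi(\sigmah + \beta_K\nabla\uh) = \nabla p^*$ for some $p^* \in \mathbb{P}_k(K)/\mathbb{R}$, I have $\|\bm{\rho}\|_K^2 = (\sigmah + \beta_K\nabla\uh, \nabla p^*)_K$ by \eqref{eq:sigma-proj}. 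So the first step is to bound the bilinear form $(\sigmah + \beta_K\nabla\uh, \nabla p)_K$ by (the residual-type quantities)$\times\|\nabla p\|_K$ for arbitrary $p\in\mathbb{P}_k(K)/\mathbb{R}$, then set $p = p^*$ and divide through by $\|\bm{\rho}\|_K = \|\nabla p^*\|_K$.

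The key step is to integrate by parts. Using $\nabla\cdot(\beta_K\nabla\uh) = \beta_K\Delta\uh$ elementwise and the divergence theorem,
\begin{equation*}
(\sigmah + \beta_K\nabla\uh, \nabla p)_K
= -(\nabla\cdot\sigmah + \nabla\cdot(\beta_K\nabla\uh), p)_K
+ \sum_{e\subset\partial K}(\sigmah\cdot\bn_e + \beta_K\nabla\uh\cdot\bn_e, p)_e .
\end{equation*}
For the volume term I insert $\nabla\cdot\sigmah = \Pi_{k-1}f + c_K$ from \eqref{eq:sigma-div}, so $-\nabla\cdot\sigmah - \nabla\cdot(\beta_K\nabla\uh) = -(\Pi_{k-1}f + \nabla\cdot(\beta_K\nabla\uh)) - c_K$; the piece $f + \nabla\cdot(\beta_K\nabla\uh)$ is exactly the element residual giving $\eta_{R,K}$ (after subtracting $f$ and absorbing $f - \Pi_{k-1}f$ into $\mathrm{osc}(f;K)$), and the constant $c_K$, by its formula \eqref{eq:sigma-c}, is controlled in modulus by $|K|^{-1}$ times the same residual and jump quantities — here one uses that $\sum_{e\subset\partial K}\int_e \{-\beta\nabla\uh\}^{\gamma_e}_e\cdot\bn_e\,ds - \int_K\Pi_{k-1}f = \sum_e\int_e(\{-\beta\nabla\uh\}^{\gamma_e}_e - \beta_K\nabla\uh)\cdot\bn_e\,ds - \int_K(\Pi_{k-1}f + \nabla\cdot(\beta_K\nabla\uh))$, so that $c_K$ is a combination of a residual mean and a sum of weighted jump means. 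For the boundary term, on each edge $e$ the weighted average satisfies $\sigmah\cdot\bn_e + \beta_K\nabla\uh\cdot\bn_e = (\{-\beta\nabla\uh\}^{\gamma_e}_e + \beta_K\nabla\uh)\cdot\bn_e$, which by the definition of $\{\cdot\}^{\gamma_e}_e$ and the choice of $\gamma_e$ is a scalar multiple $(1-\gamma_e)$ or $\gamma_e$ of the jump $\jump{\beta\nabla\uh\cdot\bn_e}{}$ — this is the standard identity $\{\beta\nabla\uh\}^{\gamma_e}_e\cdot\bn_e - \beta_{K_\pm}\nabla\uh|_{K_\pm}\cdot\bn_e = \mp(1-\gamma_e \text{ or }\gamma_e)\jump{\beta\nabla\uh\cdot\bn_e}{}$, and with $\gamma_e = \beta_{K_+}^{1/2}/(\beta_{K_+}^{1/2}+\beta_{K_-}^{1/2})$ the weight $\min(\gamma_e,1-\gamma_e)^2$ is bounded by $\beta_K/(\beta_K+\beta_{K_e})$ up to a constant, which is precisely the weight in $\eta_{J,K}$.

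After these substitutions, Cauchy–Schwarz on the volume term and on each edge term, together with a scaled trace/inverse inequality $\|p\|_e \lesssim h_e^{-1/2}\|p\|_K$ for $p\in\mathbb{P}_k(K)$ (valid on each polygonal edge of the quadtree element with constant depending on $k$ and shape-regularity) and a Poincaré-type bound $\|p\|_K \lesssim h_K\|\nabla p\|_K$ for $p\in\mathbb{P}_k(K)/\mathbb{R}$, turns everything into (residual + oscillation + jump quantities)$\times\|\nabla p\|_K$; the $h_K$, $h_e$ and $\beta_K^{-1/2}$ powers line up with the definitions of $\mathrm{osc}(f;K)$, $\eta_{R,K}$ and $\eta_{J,K}$. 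Dividing by $\|\nabla p^*\|_K$ yields \eqref{eq:efficiency}.

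The main obstacle I anticipate is the bookkeeping of the constant $c_K$: one must verify that after subtracting off its "residual mean" contribution the leftover cancels correctly, and that its "jump mean" contribution is dominated by $\eta_{J,K}$ with the correct $\beta$-weight — this requires using the weighted average identity and the Cauchy–Schwarz inequality $|\int_e g|\le h_e^{1/2}\|g\|_e$ carefully so that the $h_e/(\beta_K+\beta_{K_e})$ factors emerge rather than weaker ones. A secondary technical point is that, because $K$ is treated as a polygon with possibly many small edges from hanging nodes, the trace inequality and the number of boundary terms contribute a constant depending on the number of edges of $\partial K_{\mathrm{poly}}$, which is why the theorem states exactly that dependence; one should make sure no hidden dependence on the mesh irregularity level $l$ sneaks in beyond this edge count.
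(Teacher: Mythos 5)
Your overall route is the same as the paper's: set $\Pi(\sigmah+\beta_K\nabla\uh)=\beta_K\nabla p$ with $p\in\mathbb{P}_k(K)/\mathbb{R}$, use \eqref{eq:sigma-proj} to replace the projected flux by $\sigmah+\beta_K\nabla\uh$ itself, integrate by parts, identify $(\sigmah+\beta_K\nabla\uh)\cdot\bn_e$ with $\tfrac{\beta_K^{1/2}}{\beta_K^{1/2}+\beta_{K_e}^{1/2}}\jump{\beta\nabla\uh\cdot\bn_e}{e}$ on each edge, and in the bulk term insert $\nabla\cdot\sigmah=\Pi_{k-1}f+c_K$ and control $c_K$ by oscillation, residual and jump means exactly as the paper does. (The paper treats $k=1$ separately via \eqref{eq:sigma-div1}, but since $\Pi_{0}f+c_K$ coincides with the constant determined by \eqref{eq:sigma-div1}, your unified treatment of the volume term is legitimate for all $k\ge 1$.)

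The one genuine problem is the trace inequality you invoke, $\|p\|_e\lesssim h_e^{-1/2}\|p\|_K$. Combined with the Poincar\'e bound $\|p\|_K\lesssim h_K\|\nabla p\|_K$ this yields $\|p\|_e\lesssim h_e^{-1/2}h_K\|\nabla p\|_K$, whereas to produce the weight $h_e^{1/2}$ appearing in $\eta_{J,K}$ you need $\|p\|_e\lesssim h_e^{1/2}\|\nabla p\|_K$. Your version is off by a factor $h_K/h_e$, which on a short edge created by hanging nodes is of order $2^{l}$; the resulting constant would then depend on the irregularity level $l$, contradicting the theorem's claim that it depends only on $k$ and the number of edges of $\partial K$. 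The fix is the paper's Lemma \ref{lem:trace} (trace inequality on small edges), $h_e^{-1/2}\|v\|_e\lesssim h_K^{-1}\|v\|_K+\|\nabla v\|_K$, which rests on the geometric observation (Proposition \ref{prop:geometry}) that every edge $e\subset\partial K$, however short, supports an interior triangle $T_e\subset K$ whose height is comparable to $h_e$. This is precisely the ``hidden dependence on $l$'' you flagged at the end of your write-up: with the inequality as you stated it, that dependence does sneak in, and the argument needs the sharper small-edge trace estimate to close.
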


\begin{proof}
Let $\alpha^{-1}_K{\Pi}(\bm{\sigma}_{\mathcal{T}} + \alpha_K \nabla u_{\mathcal{T}})=:\nabla p$ on $K$, then 
$p\in \mathbb{P}_k(K)/\mathbb{R}$ and we have
\begin{equation}
\label{eq:eta-f}
\begin{aligned}
\widehat{\eta}_{\mathrm{flux},K}^2 & = \bigl({\Pi}(\bm{\sigma}_{\mathcal{T}} + \alpha_K \nabla u_{\mathcal{T}}), \nabla p \bigr)_K
 = \bigl(\bm{\sigma}_{\mathcal{T}} + \alpha_K \nabla u_{\mathcal{T}}, \nabla p \bigr)_K
\\
& = -\bigl(\nabla \cdot (\bm{\sigma}_{\mathcal{T}} +\alpha_K \nabla u_{\mathcal{T}}), p \bigr)_K
 + \sum_{e\subset \partial K}\int_e \big( \bm{\sigma}_{\mathcal{T}} + \alpha_K \nabla u_{\mathcal{T}}\big)\cdot \bm{n}_{\partial K}\big|_{e} \, p \, \mathrm{d} s.
\end{aligned} 
\end{equation}
By \eqref{eq:sigma-n}, without loss of generality we assume $K=K_-$ (the local orientation of $e$ agrees with the global one, i.e., $\bm{n}_{\partial K}\big|_{e} = \bm{n}_e$), and $K_e = K_+$ which is the element opposite to $K$ with respect to $e$, and $\gamma_e := {\alpha_{K_e}^{1/2}}/({\alpha_{K_e}^{1/2} + \alpha_{K}^{1/2}})$, we have on edge $e\subset\partial K$ 
\begin{equation}
\begin{aligned}
\big( \bm{\sigma}_{\mathcal{T}} + \alpha_K \nabla u_{\mathcal{T}}\big)\cdot \bm{n}_e &= 
\Big( (1-\gamma_e) \alpha_{K} \nabla u_{\mathcal{T}}|_K 
- (1-\gamma_e)\alpha_{K_e} \nabla u_{\mathcal{T}}|_{K_e} \Big)\cdot \bm{n}_e
\\
&= \frac{\alpha_{K}^{1/2}}{\alpha_{K}^{1/2} + \alpha_{K_e}^{1/2}}\jump{\alpha \nabla u_{\mathcal{T}}\cdot \bm{n}_e}{e}.
\end{aligned}
\end{equation}
The boundary term in \eqref{eq:eta-f} can be then rewritten as
\begin{equation}
\label{eq:eta-bd}
\begin{aligned}  
& \int_e \big( \bm{\sigma}_{\mathcal{T}} + \alpha_K \nabla u_{\mathcal{T}}\big)\cdot \bm{n}_e \,p\, \mathrm{d} s
\\
= &\;\int_e \frac{1}{\alpha_{K}^{1/2} + \alpha_{K_e}^{1/2}}\jump{\alpha \nabla u_{\mathcal{T}}\cdot \bm{n}_e}{e} 
\,\alpha_{K}^{1/2}p\, \mathrm{d} s
\\
\lesssim & \; \frac{1}{(\alpha_{K}+ \alpha_{K_e})^{1/2}} 
h_e^{1/2} \big\Vert \jump{\alpha \nabla u_{\mathcal{T}}\cdot\bm{n}_e}{} \big\Vert_e 
\alpha_{K}^{1/2} h_e^{-1/2} \norm{p}_e.
\end{aligned}    
\end{equation}
By a trace inequality on an edge of a polygon (Lemma \ref{lem:trace}), and the Poincar\'{e} inequality for $p\in \mathbb{P}_k(K)/\mathbb{R}$, we have,
\[
 h_e^{-1/2}\|p\|_e \lesssim  h_K^{-1} \|p\|_K + \|\nabla p\|_K \lesssim \|\nabla p\|_K.
\] 
As a result, 
\[
\sum_{e\subset \partial K}\int_e \big( \bm{\sigma}_{\mathcal{T}} + \alpha_K \nabla u_{\mathcal{T}}\big)\cdot \bm{n}_e \,p\, \mathrm{d} s
\lesssim \eta_{\mathrm{edge},K}\, \alpha_{K}^{1/2} \norm{\nabla p}_e
= \eta_{\mathrm{edge},K} \,\widehat{\eta}_{\mathrm{flux},K}.
\]
For the bulk term on $K$'s in \eqref{eq:eta-f}, when $k=1$, by \eqref{eq:sigma-div1}, the representation in \eqref{eq:eta-bd}, and the Poincar\'{e} inequality for $p\in \mathbb{P}_k(K)/\mathbb{R}$ again with $h_K\simeq |K|^{1/2}$, we have
\[
\begin{aligned}
&-\bigl(\nabla \cdot (\bm{\sigma}_{\mathcal{T}} +\alpha_K \nabla u_{\mathcal{T}}), p \bigr)_K \leq 
\left|\nabla \cdot (\bm{\sigma}_{\mathcal{T}} +\alpha_K \nabla u_{\mathcal{T}}) \right| |K|^{1/2} \norm{p}_K   
\\
\leq & \; \frac{1}{ |K|^{1/2}}\left|\int_K \nabla \cdot (\bm{\sigma}_{\mathcal{T}} +\alpha_K \nabla u_{\mathcal{T}}) \, \mathrm{d} \bm{x}\right|
\norm{p}_K   
\\
= & \; \frac{1}{ |K|^{1/2}} \left|\sum_{e\subset \partial K}
\int_{e} (\bm{\sigma}_{\mathcal{T}} +\alpha_K \nabla u_{\mathcal{T}})\cdot \bm{n}_e \, \mathrm{d} s\right| \norm{p}_K   
\\
\leq & \; \left(\sum_{e\subset \partial K} \frac{1}{\alpha_{K}^{1/2} + \alpha_{K_e}^{1/2}} \norm{\jump{\alpha \nabla u_{\mathcal{T}}\cdot \bm{n}_e}{}}_e 
\,\alpha_{K}^{1/2}h_e \right) \norm{\nabla p}
\\
\lesssim &\; \eta_{\mathrm{edge},K} \, \widehat{\eta}_{\mathrm{flux},K}.
\end{aligned}
\]
When $k\geq 2$, by \eqref{eq:sigma-div}, 
\begin{equation}
\label{eq:eta-div}
 \begin{aligned}
& -\bigl(\nabla \cdot (\bm{\sigma}_{\mathcal{T}} +\alpha_K \nabla u_{\mathcal{T}}), p \bigr)_K  = 
-\bigl(\Pi_{k-1} f + c_K + \nabla\cdot(\alpha_K \nabla u_{\mathcal{T}}), p \bigr)_K    
\\
\leq & \; \left( \big\Vert f- \Pi_{k-1} f  \big\Vert_K 
+ \big\Vert f + \nabla\cdot(\alpha \nabla u_{\mathcal{T}}) \big\Vert_K 
+ |c_K| |K|^{1/2}\right)\norm{p}_K.
\end{aligned}   
\end{equation}
The first two terms can be handled by combining the weights $\alpha^{-1/2}$ and $h_K$ from $\norm{p}_K\leq h_K \norm{\nabla p}_K$. For $c_K$, it can be estimated straightforwardly as follows
\begin{equation}
  \begin{aligned}
 c_K |K|^{1/2} &= \frac{1}{ |K|^{1/2} }\Big(-\int_K (\Pi_{k-1} f -f) \mathrm{d} \bm{x} 
- \int_K \big(f + \nabla\cdot (\alpha \nabla u_{\mathcal{T}})\big) \mathrm{d} \bm{x} 
\\
&\quad + \int_K \nabla\cdot (\alpha \nabla u_{\mathcal{T}}) \mathrm{d} \bm{x} 
+ \sum_{e\subset\partial K} \int_e \left\{-\alpha \nabla u_{\mathcal{T}} \right\}^{\gamma_e}_e \cdot\bm{n}_e  \mathrm{d} s\Big)
\\
&\leq   \big\Vert f- \Pi_{k-1} f  \big\Vert_K 
+ \big\Vert f + \nabla\cdot(\alpha \nabla u_{\mathcal{T}}) \big\Vert_K 
\\
& \quad + \frac{1}{ |K|^{1/2}}\sum_{e\subset\partial K} \int_e (\alpha_K \nabla u_{\mathcal{T}} - \left\{\alpha \nabla u_{\mathcal{T}} \right\}^{\gamma_e}_e) \cdot\bm{n}_e  \mathrm{d} s
\\
& \leq  \big\Vert f- \Pi_{k-1} f  \big\Vert_K 
+ \big\Vert f + \nabla\cdot(\alpha \nabla u_{\mathcal{T}}) \big\Vert_K 
\\
& \quad + \sum_{e\subset\partial K} \frac{\alpha_{K}^{1/2}}{\alpha_{K}^{1/2} + \alpha_{K_e}^{1/2}} \norm{\jump{\alpha \nabla u_{\mathcal{T}}\cdot \bm{n}_e}{}}_e .
\end{aligned}
\end{equation}
The two terms on $K$ can be treated the same way with the first two terms in \eqref{eq:eta-div} while the edge terms are handled similarly as in the $k=1$ case. As a result, we have shown 
\[
-\bigl(\nabla \cdot (\bm{\sigma}_{\mathcal{T}} +\alpha_K \nabla u_{\mathcal{T}}), p \bigr)_K
\lesssim \Big( \mathrm{osc}(f; K)  + \eta_{\mathrm{elem},K}  +  \eta_{\mathrm{edge},K} \Big) 
\alpha_K^{1/2}\norm{\nabla p}
\]
and the theorem follows.
\end{proof}

\begin{theorem}
\label{thm:efficiency-stab}
Under the same setting with Theorem \ref{thm:efficiency}, let $\widehat{\eta}_{\mathrm{stab},K}$ as the estimator in \eqref{eq:eta-stab}, we have
\begin{equation}
\label{eq:efficiency-stab}
\widehat{\eta}_{\mathrm{stab},K}^2
\lesssim \mathrm{osc}(f; K)^2  + \eta_{\mathrm{elem},K}^2  +  \eta_{\mathrm{edge},K}^2 ,
\end{equation}
The constant depends on $k$ and the number of edges on $\partial K$.
\end{theorem}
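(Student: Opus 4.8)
The plan is to estimate the stabilization seminorm $\widehat{\eta}_{s,K} = |\beta_K^{-1/2}(\mathrm{I}-\Pi)(\sigmah + \beta_K\nabla\uh)|_{S,K}$ directly from the definition of $S_K$ in \eqref{eq:norm-stab}, by splitting $S_K$ into its edge part and its interior (monomial-gradient) part. Writing $\bm{\tau} := \sigmah + \beta_K\nabla\uh$ on $K$, I would first observe that $\Pi\bm{\tau}$ is designed precisely so that $(\Pi\bm{\tau}, \nabla p)_K = (\bm{\tau},\nabla p)_K$ for all $p\in\mathbb{P}_k(K)/\mathbb{R}$, so the interior terms $((\mathrm{I}-\Pi)\bm{\tau},\nabla m_\alpha)_K$ with $m_\alpha\in\mathbb{P}_{k-1}(K)/\mathbb{R}$ vanish identically. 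Hence the monomial-gradient part of $S_K((\mathrm{I}-\Pi)\bm{\tau},(\mathrm{I}-\Pi)\bm{\tau})$ contributes nothing, and $\widehat{\eta}_{s,K}^2 = \beta_K^{-1}\sum_{e\subset\partial K} h_e\,\|(\mathrm{I}-\Pi)\bm{\tau}\cdot\bn_e\|_e^2$. This reduces everything to controlling the edge norms of both $\bm{\tau}\cdot\bn_e$ and $\Pi\bm{\tau}\cdot\bn_e$.

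Next I would handle $\bm{\tau}\cdot\bn_e$ itself: by the same computation as \eqref{eq:eta-bd} in the proof of Theorem \ref{thm:efficiency}, on each edge $e\subset\partial K$ one has $\bm{\tau}\cdot\bn_e = \big(\beta_K^{1/2}/(\beta_K^{1/2}+\beta_{K_e}^{1/2})\big)\jump{\beta\nabla\uh\cdot\bn_e}{e}$, so $h_e\beta_K^{-1}\|\bm{\tau}\cdot\bn_e\|_e^2 \le h_e (\beta_K+\beta_{K_e})^{-1}\|\jump{\beta\nabla\uh\cdot\bn_e}{}\|_e^2$, which is exactly a summand of $\eta_{J,K}^2$. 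For the $\Pi\bm{\tau}\cdot\bn_e$ contribution, write $\Pi\bm{\tau} = \nabla p$ with $p\in\mathbb{P}_k(K)/\mathbb{R}$; then a discrete trace/inverse inequality on the polygonal edge gives $h_e^{1/2}\|\nabla p\cdot\bn_e\|_e \lesssim h_e^{1/2}\|\nabla p\|_e \lesssim \|\nabla p\|_K = \widehat{\eta}_{f,K}\,\beta_K^{1/2}$. Summing over the (bounded number of) edges of $\partial K$ yields $\widehat{\eta}_{s,K}^2 \lesssim \eta_{J,K}^2 + \widehat{\eta}_{f,K}^2$, and then invoking Theorem \ref{thm:efficiency} to bound $\widehat{\eta}_{f,K}^2 \lesssim \mathrm{osc}(f;K)^2 + \eta_{R,K}^2 + \eta_{J,K}^2$ closes the argument.

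The main technical obstacle is the inverse/trace estimate $\|\nabla p\|_e \lesssim h_e^{-1/2}\|\nabla p\|_K$ for $p\in\mathbb{P}_k(K)$ on a polygonal element $K$ whose edges may be much shorter than $h_K$ (the hanging-node edges): one cannot use a naive scaling argument edge-by-edge because a short edge would give a blown-up constant. The correct route is to note $\nabla p$ lives in a finite-dimensional polynomial space on $K$ and use a trace inequality of the form $h_e^{1/2}\|w\|_e \lesssim \|w\|_K + h_K\|\nabla w\|_K$ valid on the whole polygon $K$ (this is the polygonal trace inequality already cited as Lemma \ref{lem:trace} and used in the proof of Theorem \ref{thm:efficiency}), combined with a polynomial inverse inequality $\|\nabla w\|_K \lesssim h_K^{-1}\|w\|_K$ on $K$; applied to $w=\nabla p$ componentwise this gives $h_e^{1/2}\|\nabla p\|_e \lesssim \|\nabla p\|_K$ with a constant depending only on $k$ and the shape-regularity of $K$ as a polygon, not on the ratio $h_e/h_K$. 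I would also need to be slightly careful that the constant's dependence is only on $k$ and $\#\{e\subset\partial K\}$, as claimed; this follows since the number of edges of a quadtree element (as a polygon) is controlled, and all estimates are scale-invariant after the standard normalization.
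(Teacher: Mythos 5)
Your proposal is correct, and it removes the projection by a different mechanism than the paper. The paper's proof is a two-line reduction: it invokes the norm equivalence of Lemma \ref{lem:normeq-V} to pass from $\big\vert\beta_K^{-1/2}(\operatorname{I}-{\Pi})(\sigmah+\beta_K\nabla\uh)\big\vert_{S,K}$ to the $S_K$-seminorm of $\sigmah+\beta_K\nabla\uh$ itself, and then evaluates the edge and interior DoFs via \eqref{eq:sigma-n} and \eqref{eq:sigma-i} to land back in the estimates of Theorem \ref{thm:efficiency}. You instead exploit the structure of $S_K$ directly: the interior terms $\bigl((\operatorname{I}-{\Pi})\bm{\tau},\nabla m_\alpha\bigr)_K$ vanish identically because $m_\alpha\in\mathbb{P}_{k-1}(K)/\mathbb{R}\subset\mathbb{P}_k(K)/\mathbb{R}$ and ${\Pi}$ is defined by exactly that orthogonality in \eqref{eq:sigma-proj}; the surviving edge terms are split by the triangle inequality into the $\bm{\tau}\cdot\bn_e$ part, which reproduces the weighted jumps verbatim, and the ${\Pi}\bm{\tau}\cdot\bn_e$ part, which you absorb into $\widehat{\eta}_{f,K}$ via the small-edge trace inequality of Lemma \ref{lem:trace} combined with a polynomial inverse estimate, and then close with Theorem \ref{thm:efficiency}. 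Your route is more elementary and self-contained: it bypasses the Helmholtz-decomposition machinery behind Lemma \ref{lem:normeq-V}, and in particular it does not require $\sigmah+\beta_K\nabla\uh$ to belong to $\mathcal{V}_k(K)$ (which is the tacit hypothesis of that lemma, and is not literally satisfied for $k\geq 2$ since $\nabla\cdot(\beta_K\nabla\uh)$ need not lie in $\mathbb{P}_{k-1}(K)$); the price is the extra trace/inverse step for ${\Pi}\bm{\tau}$, whose constant still depends only on $k$ and the number of edges, consistent with the statement. Your handling of the short hanging-node edges is also the right one: the ratio $h_e/h_K\leq 1$ works in your favor after applying Lemma \ref{lem:trace} componentwise to $\nabla p$.
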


\begin{proof}
This theorem follows directly from the norm equivalence Lemma \ref{lem:normeq-V}:
\[
\big\vert{\alpha_K^{-1/2}(\operatorname{I}-{\Pi})(\bm{\sigma}_{\mathcal{T}} + \alpha_K \nabla u_{\mathcal{T}})} \big\vert_{S,K}
\lesssim \big\vert{\alpha_K^{-1/2}(\bm{\sigma}_{\mathcal{T}} + \alpha_K \nabla u_{\mathcal{T}})} \big\vert_{S,K},
\]
while evaluating the DoFs $(\mathfrak{e})$ and $(\mathfrak{i})$ using \eqref{eq:sigma-n} and \eqref{eq:sigma-i} reverts us back to the proof of Theorem \ref{thm:efficiency}.
\end{proof}

\begin{theorem}
\label{thm:efficiency-1}
Under the same setting with Theorem \ref{thm:efficiency}, on any $K\in \mathcal{T}_{\mathrm{poly}}$ with 
$\omega_K$ defined as the collection of elements in $\mathcal{T}$ which share at least 1 vertex with $K$
\begin{equation}
\label{eq:efficiency-1}
\widehat{\eta}_{K}
\lesssim \mathrm{osc}(f;K) + \big\Vert \alpha^{1/2}\nabla (u-u_{\mathcal{T}})\big\Vert_{\omega_K},
\end{equation}
with a constant independent of $\alpha$, but dependent on $k$ and the maximum number of edges in 
$K\in \mathcal{T}_{\mathrm{poly}}$. 
\end{theorem}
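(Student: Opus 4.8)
The plan is to reduce $\widehat{\eta}_K$ to the residual-type indicators $\eta_{R,K}$ and $\eta_{J,K}$ by means of the theorems already established, and then to invoke the classical bubble-function lower bounds, adapted to the polygons of $\mathcal{T}_{\mathrm{poly}}$. Indeed, Theorems~\ref{thm:efficiency} and~\ref{thm:efficiency-stab} already give $\widehat{\eta}_{f,K}^2+\widehat{\eta}_{s,K}^2\lesssim \mathrm{osc}(f;K)^2+\eta_{R,K}^2+\eta_{J,K}^2$; and for $k\geq 2$ the remaining term $\eta_{r,K}$ is controlled the same way, since $\nabla\cdot\sigmah=\Pi_{k-1}f+c_K$ gives $f-\nabla\cdot\sigmah=(f-\Pi_{k-1}f)-c_K$ with $f-\Pi_{k-1}f$ orthogonal to $\mathbb{R}$, and the bound on $c_K$ produced inside the proof of Theorem~\ref{thm:efficiency} yields $\eta_{r,K}\lesssim \mathrm{osc}(f;K)+\eta_{R,K}+\eta_{J,K}$. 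Hence it suffices to bound $\eta_{R,K}$ and $\eta_{J,K}$ by $\mathrm{osc}(f;\omega_K)$ and $\norm{\beta^{1/2}\nabla(u-\uh)}_{\omega_K}$ with constants independent of $\beta$.

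For $\eta_{R,K}$ I would run the standard interior-bubble argument. Because $K\in\mathcal{T}_{\mathrm{poly}}$ is geometrically a rectangle, the tensor-product bubble $b_K\in\mathbb{Q}_2(K)\subset\mathbb{Q}_k(K)$ vanishing on $\partial K$ is available, with the usual scaled inverse estimate and norm equivalence for polynomials times $b_K$. Testing the elementwise residual against $v:=b_K\bigl(\Pi_{k-1}f+\nabla\cdot(\beta_K\nabla\uh)\bigr)\in H^1_0(K)$, integrating by parts on $K$, and reorganizing through the weak form of \eqref{eq:model} (which turns $\bigl(f+\nabla\cdot(\beta\nabla\uh),v\bigr)_K$ into $\bigl(\beta\nabla(u-\uh),\nabla v\bigr)_K$), one obtains $\eta_{R,K}\lesssim \mathrm{osc}(f;K)+\norm{\beta^{1/2}\nabla(u-\uh)}_K$; this is $\beta$-robust because $\beta\equiv\beta_K$ on $K$ and $\beta_K$ cancels the weight $\beta_K^{-1/2}$ in $\eta_{R,K}$.

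For $\eta_{J,K}$ I would use an edge-bubble argument, and this is the step I expect to be the main obstacle on a quadtree: across an edge $e\subset\partial K$ of $\mathcal{T}_{\mathrm{poly}}$ the opposite element $K_e$ may be coarser, so $e$ is merely a sub-segment of an edge of $K_e$ and there is no room for an edge bubble living on all of $K_e$. I would circumvent this by localizing to $\omega_e:=K\cup S_e$, where $S_e\subset K_e$ is a rectangular slab of width $\simeq h_e$ laid against $e$; such a slab always exists with a shape-regularity constant independent of the irregularity level $l$, because $h_e=\min(h_K,h_{K_e})$ and the $\mathcal{T}$-edge containing $e$ has length at least $2h_e$. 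On $\omega_e$ the standard edge bubble $b_e$ then lives at scale $h_e$. With $J_e:=\jump{\beta\nabla\uh\cdot\bn_e}{e}\in\mathbb{P}_k(e)$ and $v:=b_e\widetilde{J_e}$, the constant-normal extension of $J_e$ multiplied by $b_e$, integration by parts over $K$ and $S_e$ together with the weak form of \eqref{eq:model} gives
\[
\int_e J_e\, v\,\dd s=-\bigl(\beta\nabla(u-\uh),\nabla v\bigr)_{\omega_e}+\bigl(f+\nabla\cdot(\beta\nabla\uh),v\bigr)_{\omega_e}.
\]
Using $\norm{J_e}_e^2\lesssim\int_e J_e v\,\dd s$, the bubble estimates $\norm{v}_{\omega_e}\lesssim h_e^{1/2}\norm{J_e}_e$, $\norm{\nabla v}_{\omega_e}\lesssim h_e^{-1/2}\norm{J_e}_e$, $\norm{\beta^{1/2}\nabla v}_{\omega_e}^2\leq(\beta_K+\beta_{K_e})h_e^{-1}\norm{J_e}_e^2$, and absorbing the bulk term via $h_e\leq\min(h_K,h_{K_e})$ and $\beta_{K'}/(\beta_K+\beta_{K_e})\leq 1$, I would obtain the per-edge bound $\tfrac{h_e^{1/2}}{(\beta_K+\beta_{K_e})^{1/2}}\norm{J_e}_e\lesssim\norm{\beta^{1/2}\nabla(u-\uh)}_{\omega_e}+\eta_{R,K}+\eta_{R,K_e}$. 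Squaring, summing over the boundedly many $e\subset\partial K$, and applying the bound of the previous step to each $\eta_{R,K_e}$ then yields $\eta_{J,K}^2\lesssim\norm{\beta^{1/2}\nabla(u-\uh)}_{\omega_K}^2+\mathrm{osc}(f;\omega_K)^2$.

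Putting the three pieces together gives $\widehat{\eta}_K^2\lesssim\mathrm{osc}(f;\omega_K)^2+\norm{\beta^{1/2}\nabla(u-\uh)}_{\omega_K}^2$, and \eqref{eq:efficiency-1} follows after a square root. Every constant traces back to bubble-function and norm-equivalence estimates on shape-regular pieces of scale $h_K$ or $h_e$, hence depends only on $k$ and the maximal number of edges of a polygon in $\mathcal{T}_{\mathrm{poly}}$, and not on $\beta$ or on $l$; that the $\beta$-robustness survives the per-edge estimate is precisely what the weighting $\gamma_e$ and the form of $\eta_{J,K}$ are engineered to deliver.
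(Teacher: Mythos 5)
Your proposal takes the same route as the paper: the paper's own proof of Theorem \ref{thm:efficiency-1} is a one-line reduction to Theorems \ref{thm:efficiency} and \ref{thm:efficiency-stab} combined with the ``common bubble function argument'' for the efficiency of the residual-based indicator, and you have simply written that bubble argument out, including the (necessary) adaptation of the edge bubble to sub-edges created by hanging nodes via a slab $S_e\subset K_e$ of width $\simeq h_e$. That elaboration is sound and is more informative than what the paper records.

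One step does not survive as written, and it is worth isolating: your claim that $\eta_{r,K}\lesssim\mathrm{osc}(f;K)+\eta_{R,K}+\eta_{J,K}$ for $k\geq 2$. With the paper's definitions, $\eta_{r,K}=\Vert\beta^{-1/2}(f-\nabla\cdot\sigmah)\Vert_K$ carries no factor of $h_K$, whereas $\mathrm{osc}(f;K)=\beta_K^{-1/2}h_K\Vert f-\Pi_{k-1}f\Vert_K$ and $\eta_{R,K}$ do; since $f-\nabla\cdot\sigmah=(f-\Pi_{k-1}f)-c_K$, your reduction actually produces $h_K^{-1}\mathrm{osc}(f;K)+h_K^{-1}\eta_{R,K}+\cdots$, which is off by a factor $h_K^{-1}$ and does not yield \eqref{eq:efficiency-1} on fine meshes. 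This mismatch is arguably inherited from the paper (the reliability proof only ever uses the $h_K$-weighted quantity $\beta_K^{-1}h_K^2\Vert f-\nabla\cdot\sigmah\Vert_K^2$, so the consistent definition of $\eta_{r,K}$ should carry the weight $\beta_K^{-1/2}h_K$), but your argument should either adopt that weighted definition explicitly or restrict itself to $k=1$, where $\widehat{\eta}_K$ contains no $\eta_{r,K}$ term. Two smaller points: your final bound carries $\mathrm{osc}(f;\omega_K)$ rather than the $\mathrm{osc}(f;K)$ of the statement (the edge-bubble step genuinely needs the neighbouring element residuals, so yours is the honest form), and your closing claim that the constant is independent of $l$ sits uneasily with its admitted dependence on the maximal number of edges of $K\in\mathcal{T}_{\mathrm{poly}}$, which itself grows with $l$.
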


\begin{proof}
This is a direct consequence of Theorem \ref{thm:efficiency} and \ref{thm:efficiency-stab} and the fact that the residual-based error indicator is efficient by a common bubble function argument.
\end{proof}

\subsection{Reliability}
\label{sec:reliability}
In this section, we shall prove that the computable error estimator $\widehat{\eta}$ is reliable under two common assumptions in the \emph{a posteriori} error estimation literature. For the convenience of the reader, we rephrase them here using a ``layman'' description, for more detailed and technical definition please refer to the literature cited.

\begin{assumption}[$\mathcal{T}$ is $l$-irregular \cite{Carstensen;Hu:2009hanging}]
\label{asp:irregular}
Any given $\mathcal{T}$ is always refined from a mesh with no hanging nodes by a quadsecting red-refinement.  For any two neighboring elements in $\mathcal{T}$, the difference in their refinement levels is $\leq l$ for a uniformly bounded constant $l$, i.e., for any edge $e\in \mathcal{E}$, it has at most $l$ hanging nodes.
 \end{assumption}

By Assumption \ref{asp:irregular}, we denote the father $1$-irregular mesh of $\mathcal{T}$ as 
$\mathcal{T}_1$. On $\mathcal{T}_1$, a subset of all nodes is denoted by $\mathcal{N}_{1}$, which includes the regular nodes $\mathcal{N}_R$ on $\mathcal{T}_1$, as well as $\mathcal{N}_E$ as the set of end points of edges with a hanging node as the midpoint. By \cite[Theorem 2.1]{Carstensen;Hu:2009hanging}, there exists a set of bilinear nodal bases $\{\phi_z\}$ associated with $z\in \mathcal{N}_{1}$, such that $\{\phi_z\}$ form a partition of unity and can be used to construct a 
Cl\'{e}ment-type quasi-interpolation. Furthermore, the following assumption assures that the Cl\'{e}ment-type quasi-interpolant is robust with respect to the coefficient distribution on a vertex patch, when taking nodal DoFs as a weighted average.

\begin{assumption}[Quasi-monotonicity of $\alpha$ \cite{Bernardi;Verfurth:2000Adaptive}]
\label{asp:quasi-monotone}
On $\mathcal{T}$, let $\phi_z$ be the bilinear nodal basis associated with $z\in 
\mathcal{N}_{1}$, with $\omega_{z} :=\operatorname{supp} \phi_z$. For every element $K \subset 
\omega_{z}, K\in \mathcal{T}$, there exists a simply connected element path leading to 
$\omega_{m(z)}$, which is a Lipschitz domain containing the elements where the piecewise constant coefficient 
$\alpha$ achieves the maximum (or minimum) on $\omega_{z}$. 
\end{assumption}

Denote 
\begin{equation}
\label{eq:interp-proj}
\pi_{z} v=
\left\{\begin{array}{ll}
\displaystyle \frac{\int_{\omega_{z} \cap \omega_{m(z)}} v \phi_z }{\int_{\omega_{z} \cap \omega_{m(z)}} 
\phi_z }& \text { if } \bm{z} \in \Omega,
\\ 
0 & \text { if } \bm{z} \in \partial \Omega.
\end{array}\right.
\end{equation}
We note that if $\alpha$ is a constant on $\omega_z$, $ (1, \left(v-\pi_{z} v\right) \phi_{z})_{\omega_z} = 0$. A quasi-interpolation $\mathcal{I}: L^2(\Omega) \to \mathcal{Q}_1(\mathcal{T}_1)$ can be defined as
\begin{equation}
\label{eq:interp}
\mathcal{I} v := \sum_{z\in \mathcal{N}_1} (\pi_z v)\phi_z.  
\end{equation}


\begin{lemma}[Estimates for $\pi_z$ and $\mathcal{I}$]
\label{lem:interp}
Under Assumption \ref{asp:irregular} and \ref{asp:quasi-monotone}, the following estimates hold for any $v\in H^1(\omega_K)$
\begin{equation}
\label{eq:est-interp}
\alpha_K^{1/2} h_K^{-1} \norm{v - \mathcal{I}v}_{K}
+ \alpha_K^{1/2} \norm{\nabla \mathcal{I}v}_{K} \lesssim \big\Vert \alpha^{1/2} \nabla v\big\Vert_{\omega_K},
\end{equation}
and for $\bm{z}\in \mathcal{N}_1$
\begin{equation}
\label{eq:est-proj}
\sum_{K\subset\omega_z} h_{z}^{-2} \|\alpha^{1/2}(v-\pi_{z} v)\phi_z\|_{K}^2 \lesssim 
\big\Vert \alpha^{1/2} \nabla v\big\Vert_{\omega_z}^2,
\end{equation}
in which $h_z := \max_{K\subset\omega_z} h_K$, and here $\omega_K$ denotes the union of elements in $\mathcal{T}_1$ sharing at least a node (hanging or regular) with $K$.
\end{lemma}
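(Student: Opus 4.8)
The plan is to prove the two estimates in Lemma~\ref{lem:interp} by localizing to vertex patches and invoking the robustness of the weighted Cl\'ement-type interpolation guaranteed by Assumptions~\ref{asp:irregular} and~\ref{asp:quasi-monotone}. First I would establish the local projection estimate \eqref{eq:est-proj}. Fix $\bz\in\mathcal{N}_1$. When $\beta$ is constant on $\omega_z$, the value $\pi_z v$ is the $\phi_z$-weighted average of $v$ over $\omega_z\cap\omega_{m(z)}$, so $v-\pi_z v$ has vanishing weighted mean; a Poincar\'e--Friedrichs inequality on the (uniformly shape-regular, by Assumption~\ref{asp:irregular}) patch $\omega_z\cap\omega_{m(z)}$ then gives $\|v-\pi_z v\|_{\omega_z\cap\omega_{m(z)}}\lesssim h_z\|\nabla v\|_{\omega_z\cap\omega_{m(z)}}$; since $0\le\phi_z\le1$ this controls $\sum_{K\subset\omega_z}h_z^{-2}\|(v-\pi_z v)\phi_z\|_K^2$. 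For the general piecewise-constant $\beta$, I would follow the standard quasi-monotone argument (as in \cite{Petzoldt:2002posteriori,Bernardi;Verfurth:2000Adaptive}): walk along the simply connected element path from each $K\subset\omega_z$ to $\omega_{m(z)}$ provided by Assumption~\ref{asp:quasi-monotone}, using that $\beta$ is monotone along the path so that $\beta_K\le\beta_{m(z)}$ (resp.\ $\ge$) up to a path-length-bounded constant, and a chain of local Poincar\'e inequalities along the path to absorb the coefficient weights into $\|\beta^{1/2}\nabla v\|_{\omega_z}^2$. The boundary case $\bz\in\partial\Omega$ is handled by a Poincar\'e--Friedrichs inequality with zero trace on $\partial\Omega$.

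Next I would derive the interpolation estimate \eqref{eq:est-interp} from \eqref{eq:est-proj}. Since $\{\phi_z\}_{z\in\mathcal{N}_1}$ is a partition of unity on the relevant region (by \cite[Theorem 2.1]{Carstensen;Hu:2009hanging}), on each $K$ we can write $v-\mathcal{I}v = \sum_{z:\,K\subset\omega_z}(v-\pi_z v)\phi_z$, a finite sum with cardinality bounded by shape-regularity. Taking $L^2(K)$ norms, multiplying by $\beta_K^{1/2}h_K^{-1}$, and using that on a patch $h_z\simeq h_K$ for all $K\subset\omega_z$ (again Assumption~\ref{asp:irregular}), the term $\beta_K^{1/2}h_K^{-1}\|v-\mathcal{I}v\|_K$ is bounded by a sum of the quantities estimated in \eqref{eq:est-proj}, hence by $\|\beta^{1/2}\nabla v\|_{\omega_K}$. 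For the gradient term I would use a local inverse inequality $\|\nabla\phi_z\|_{L^\infty(K)}\lesssim h_K^{-1}$ together with $\nabla\mathcal{I}v = \sum_z(\pi_z v)\nabla\phi_z = \sum_z(\pi_z v - v(\bx_0))\nabla\phi_z$ for any fixed constant (since $\sum_z\nabla\phi_z=0$ where the partition of unity is exact); choosing the constant appropriately and reusing \eqref{eq:est-proj} again bounds $\beta_K^{1/2}\|\nabla\mathcal{I}v\|_K$. One technical point worth spelling out is that near $\mathcal{N}_E$ the nodal values $\pi_z v$ at the genuine hanging node are forced to be the average of the two endpoint values (this is how \cite[Theorem 2.1]{Carstensen;Hu:2009hanging} restores conformity), so the differencing argument must be applied on the full $1$-irregular patch in $\mathcal{T}_1$ rather than on individual $\mathcal{T}$-elements.

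The main obstacle I anticipate is the coefficient-robustness bookkeeping in the general $\beta$ case: one must be careful that the constant in the chain of Poincar\'e inequalities along the element path depends only on the (uniformly bounded) path length and on shape-regularity, not on the jumps in $\beta$, and that the path can genuinely be chosen to be ``downhill'' (toward $\beta_{m(z)}$) at every step, which is exactly what the quasi-monotonicity hypothesis in Assumption~\ref{asp:quasi-monotone} buys us. A secondary subtlety is the interaction of the hanging-node structure with the patches $\omega_z$ and $\omega_K$: because $\mathcal{T}$ is only $l$-irregular, a single element of $\mathcal{T}$ can be adjacent to many small elements across one edge, so one must verify that the number of $z\in\mathcal{N}_1$ with $K\subset\omega_z$, and the number of $K'\in\mathcal{T}_1$ entering $\omega_K$, remain bounded under Assumption~\ref{asp:irregular}; this is where the reduction to the father $1$-irregular mesh $\mathcal{T}_1$ is essential, since on $\mathcal{T}_1$ the patches have uniformly bounded cardinality. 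Everything else is a routine assembly of local estimates.
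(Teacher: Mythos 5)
Your proposal is correct and takes essentially the same route as the paper: the paper's proof consists of citing Bernardi--Verf\"urth (Lemma 2.8) for the weighted-average estimate \eqref{eq:est-proj} and observing that the bounds for $\mathcal{I}$ rely only on the partition-of-unity property of $\{\phi_z\}$, which are precisely the quasi-monotone path argument and the partition-of-unity decomposition you write out in detail. Your extra bookkeeping about the hanging-node patches and the reduction to the $1$-irregular father mesh $\mathcal{T}_1$ matches the paper's appeal to Carstensen--Hu (Theorem 2.1) and is a reasonable filling-in of what the paper leaves to the cited references.
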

\begin{proof}
The estimate for $\pi_z$ follows from \cite[Lemma 2.8]{Bernardi;Verfurth:2000Adaptive}. 
For $\mathcal{I}$, its error estimates and stability only rely on the partition of unity property of the nodal basis set $\{\phi_z\}$ (see e.g., \cite{Verfurth:1999estimates}), therefore the proof follows the same argument with the ones used on triangulations in \cite[Lemma 2.8]{Bernardi;Verfurth:2000Adaptive}. 
\end{proof}

Denotes the subset of nodes $\{\bm{z}\}\subset \mathcal{N}_1$ (i) on the boundary as $\mathcal{N}_{\partial \Omega}$ and (ii) with the coefficient $\alpha$ on patch $\omega_z$ as $\mathcal{N}_I$.  
For the lowest order case, we need the following oscillation term for $f$
\begin{equation}
\label{eq:eta-osc}
\begin{aligned}
  \mathrm{osc}(f;\mathcal{T})^2 := 
& \sum_{z \in \mathcal{N}_1\cap( \mathcal{N}_{\partial \Omega}  \cup \mathcal{N}_I)} 
h_{z}^{2} \big\|\alpha^{-1/2} f\big\|_{\omega_{z}}^2
\\
+&  \sum_{z \in \mathcal{N}_1 \backslash ( \mathcal{N}_{\partial \Omega}  \cup \mathcal{N}_I)} 
h_{z}^{2} \big\|\alpha^{-1/2} (f - f_z)\big\|_{\omega_{z}}^2,
\end{aligned}
\end{equation}
with $f_z := {\int_{\omega_{z}} v \phi_z }/{\int_{\omega_{z}} \phi_z }$.

\begin{theorem}
\label{thm:reliability}
Let \(u_{\mathcal{T}}\) be the solution to problem \eqref{eq:model-fem}, and $\widehat{\eta}$ be the computable error estimator in \eqref{eq:eta-computable}, under Assumption \ref{asp:quasi-monotone} and \ref{asp:irregular}, we have for $k=1$
\begin{equation}
\label{eq:reliability-1}
\big\Vert{\alpha^{1/2}\nabla (u - u_{\mathcal{T}})}\big\Vert \lesssim \left(\widehat{\eta}^2 
+\mathrm{osc}(f;\mathcal{T})^2 \right)^{1/2}.
\end{equation}
For $k\geq 2$,
\begin{equation}
\big\Vert{\alpha^{1/2}\nabla (u - u_{\mathcal{T}})}\big\Vert  \lesssim \widehat{\eta},
\end{equation}
where the constant depends on $l$ and $k$.
\end{theorem}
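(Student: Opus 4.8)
The plan is to follow the standard route for recovery-based a posteriori estimators: bound the energy error by the residual via the Galerkin orthogonality, then insert the recovered flux $\sigmah$ and use its $\hdiv$-conformity to localize. First I would let $e := u - \uh$ and test the error equation against $v := e - \mathcal{I}e$, where $\mathcal{I}$ is the quasi-interpolation from \eqref{eq:interp} landing in $\mathcal{Q}_1(\mathcal{T}_1)\subset \mathcal{Q}_k(\mathcal{T})\cap H^1_0(\Omega)$; by Galerkin orthogonality \eqref{eq:model-fem} this does not change the left side, so
\[
\norm{\beta^{1/2}\nabla e}^2 = (\beta\nabla e, \nabla v) = (f, v) - (\beta\nabla\uh, \nabla v).
\]
Now I add and subtract $\sigmah$: writing $(\beta\nabla\uh,\nabla v) = (\beta\nabla\uh + \sigmah, \nabla v) - (\sigmah, \nabla v)$ and integrating the $\sigmah$ term by parts elementwise (legitimate since $\sigmah\in\hdiv$, so interelement normal jumps vanish and boundary terms drop because $v\in H^1_0$), I get
\[
\norm{\beta^{1/2}\nabla e}^2 = \bigl(f - \nabla\cdot\sigmah,\, v\bigr) - \bigl(\sigmah + \beta\nabla\uh,\, \nabla v\bigr).
\]
This is the key identity: the first term is controlled by $\eta_{r,K}$ (which vanishes for $k=1$ since then $\nabla\cdot\sigmah$ matches $f$ only up to a constant — this is exactly where the oscillation term $\mathrm{osc}(f;\mathcal{T})$ and the node-wise splitting in \eqref{eq:eta-osc} enter), and the second term by $\eta_{f,K}$.

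For the second term, the subtlety is that $\eta_{f,K}$ in \eqref{eq:eta-flux} uses the \emph{projected} flux $\widehat\eta_{f,K}$ plus the stabilization $\eta_{s,K}$, not $\norm{\beta^{-1/2}(\sigmah+\beta\nabla\uh)}_K$ directly. So I would split $\sigmah+\beta\nabla\uh = \Pi(\sigmah+\beta\nabla\uh) + (\mathrm{I}-\Pi)(\sigmah+\beta\nabla\uh)$ on each $K$. Against $\nabla v|_K$, the projected part pairs to give $\widehat\eta_{f,K}\,\norm{\beta^{1/2}\nabla v}_K$ by the defining property \eqref{eq:sigma-proj} of $\Pi$ (note $\nabla v|_K$ need not be in $\nabla\mathbb{P}_k(K)$, but one can first $L^2$-project $v$ onto $\mathbb{P}_k(K)/\mathbb{R}$ within each element — actually cleaner: since $\Pi$ is an oblique projection onto $\nabla\mathbb{P}_k(K)$, write $(\Pi\bm\tau,\nabla v)_K$ and handle the complement via the stability seminorm). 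The complement part is where $\eta_{s,K}$ and the norm-equivalence Lemma \ref{lem:normeq-V} do the work: $(\mathrm{I}-\Pi)(\sigmah+\beta\nabla\uh)$ is estimated through its DoFs, giving the $|\cdot|_{S,K}$ seminorm, i.e. $\eta_{s,K}$, times a dual quantity built from $v$ that Lemma \ref{lem:trace}-type trace/inverse estimates bound by $\norm{\nabla v}_{\omega_K}$. Summing over $K$, Cauchy–Schwarz, and the interpolation estimate \eqref{eq:est-interp} for $\norm{\nabla v}$ (which is $\lesssim \norm{\beta^{1/2}\nabla e}$ with the quasi-monotonicity Assumption \ref{asp:quasi-monotone} guaranteeing $\beta$-robustness) closes the $k\ge 2$ case after dividing through by $\norm{\beta^{1/2}\nabla e}$.

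For $k=1$, the first term $(f-\nabla\cdot\sigmah, v)$ does not vanish and is not controlled by any $\eta_{r,K}$ (which is absent). Here I would not use $v = e - \mathcal{I}e$ directly on this term but instead exploit the node-wise structure: write $v = \sum_{z\in\mathcal{N}_1}(e - \pi_z e)\phi_z$ and use that, by \eqref{eq:sigma-div1}, $\nabla\cdot\sigmah$ on $K$ equals $|K|^{-1}\sum_{e\subset\partial K}\int_e\{-\beta\nabla\uh\}^{\gamma_e}_e\cdot\bn_e$, together with the fact that $(1,(e-\pi_z e)\phi_z)_{\omega_z}=0$ when $\beta$ is constant on $\omega_z$ (noted right after \eqref{eq:interp-proj}). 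This orthogonality kills the constant part of $f$ on interior patches with locally constant $\beta$, leaving exactly the two sums in $\mathrm{osc}(f;\mathcal{T})^2$: boundary/high-contrast patches pay the full $h_z\norm{\beta^{-1/2}f}_{\omega_z}$, the rest pay $h_z\norm{\beta^{-1/2}(f-f_z)}_{\omega_z}$; the remaining $\nabla\cdot\sigmah$ contributions are edge-jump terms of $\beta\nabla\uh$, which are in turn bounded by $\widehat\eta_{f,K}$ through the efficiency-style computation already carried out in the proof of Theorem \ref{thm:efficiency} (run in reverse). The main obstacle is this $k=1$ bookkeeping — getting the patch-wise oscillation splitting to align precisely with \eqref{eq:eta-osc} while keeping all constants $\beta$-independent under Assumption \ref{asp:quasi-monotone} — whereas the $k\ge 2$ argument is the textbook equilibrated-flux estimate modulo the routine verification that $\Pi$ and the stabilization $S_K$ behave as in Lemma \ref{lem:normeq-V}.
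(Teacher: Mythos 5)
Your proposal is correct and follows essentially the same route as the paper: Galerkin orthogonality tested against $\varepsilon-\mathcal{I}\varepsilon$, insertion of the $\hdiv$-conforming $\sigmah$ and elementwise integration by parts to reach the identity $\bigl(f-\nabla\cdot\sigmah,\varepsilon-\mathcal{I}\varepsilon\bigr)-\bigl(\sigmah+\beta\nabla\uh,\nabla(\varepsilon-\mathcal{I}\varepsilon)\bigr)$, then weighted Cauchy--Schwarz, the interpolation estimates of Lemma \ref{lem:interp}, the norm equivalence of Lemma \ref{lem:normeq-V} to pass to the computable $\widehat{\eta}$, and for $k=1$ the partition-of-unity splitting with $\pi_z$ producing $\mathrm{osc}(f;\mathcal{T})$. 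The only cosmetic difference is that for the $k=1$ divergence term the paper invokes the inverse inequality of Lemma \ref{lem:inverse-V} on $\nabla\cdot(\sigmah+\beta_K\nabla\uh)$ rather than your ``efficiency run in reverse'' via the edge-jump representation, which amounts to the same bound.
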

\begin{proof}
Let $ \varepsilon := u - u_{\mathcal{T}} \in H^1_0(\Omega)$, and $\mathcal{I}\varepsilon\in \mathcal{Q}_1(\mathcal{T}_1) \subset \mathcal{Q}_1(\mathcal{T})$ be the quasi-interpolant in \eqref{eq:interp} of $\varepsilon$, then by the Galerkin orthogonality, $\alpha\nabla u + \bm{\sigma}_{\mathcal{T}} \in \bm{H}(\mathrm{div})$, the Cauchy-Schwarz inequality, and the interpolation estimates \eqref{eq:est-interp}, we have for $k\geq 2$,
\begin{align*}
&\big\Vert{\alpha^{1/2}\nabla \varepsilon}\big\Vert^2  
= \big(\alpha\nabla (u - u_{\mathcal{T}}),  \nabla(\varepsilon -\mathcal{I}\varepsilon)\big)  
\\
=&  \big(\alpha \nabla u + \bm{\sigma}_{\mathcal{T}}, \nabla(\varepsilon -\mathcal{I}\varepsilon)\big)   
- \big(\alpha \nabla u_{\mathcal{T}} + \bm{\sigma}_{\mathcal{T}},  \nabla(\varepsilon -\mathcal{I}\varepsilon)\big) 
\\
= & \big(f -\nabla\cdot \bm{\sigma}_{\mathcal{T}}, \varepsilon -\mathcal{I}\varepsilon\big)   
- \big(\alpha \nabla u_{\mathcal{T}} + \bm{\sigma}_{\mathcal{T}},  \nabla(\varepsilon -\mathcal{I}\varepsilon)\big)
\\
\leq& \left(\sum_{K\in\mathcal{T}} \alpha_K^{-1}h_K^2\norm{f - \nabla\cdot\bm{\sigma}_{\mathcal{T}}}_{K}^2  \right)^{1/2}
 \left(\sum_{K\in\mathcal{T}} \alpha_K h_K^{-2}\norm{\varepsilon - \mathcal{I}\varepsilon}_{K}^2  \right)^{1/2} 
\\ 
&  \left(\sum_{K\in\mathcal{T}} \alpha_K^{-1}\big\Vert{\alpha \nabla u_{\mathcal{T}} + \bm{\sigma}_{\mathcal{T}}} \big\Vert_{K}^2  \right)^{1/2}
 \left(\sum_{K\in\mathcal{T}} \alpha_K \norm{\nabla(\varepsilon - \mathcal{I}\varepsilon)}_{K}^2  \right)^{1/2}.
\\
& \lesssim  \left(\sum_{K\in\mathcal{T}} (\eta_{\mathrm{res},K}^2+\eta_{\mathrm{flux},K}^2) \right)^{1/2} 
 \left(\sum_{K\in\mathcal{T}} \big\Vert{\alpha^{1/2}\nabla \varepsilon}\big\Vert_{\omega_K} \right)^{1/2}.
\end{align*}
Applying the norm equivalence of $\eta$ to $\widehat{\eta}$ by Lemma \ref{lem:normeq-V}, as well as the fact that the number of elements in $\omega_K$ is uniformly bounded by Assumption \ref{asp:irregular}, yields the desired estimate. 

When $k=1$, the residual term on $K$ can be further split thanks to $\Delta \mathbb{Q}_1(K) = \{0\}$. First we notice that by the fact that $\{\phi_z\}$ form a partition of unity,
\begin{equation}
\label{eq:interp-f}
(f, \varepsilon-\mathcal{I}\varepsilon)=\sum_{z \in \mathcal{N}_1} \sum_{K \subset \omega_{z}}
\big(f,\left(\varepsilon-\pi_{z} \varepsilon\right) \phi_{z}\big)_{K},
\end{equation} 
in which a patch-wise constant $f_z$ (weighted average of $f$) can be further inserted by the definition of $\pi_z$ \eqref{eq:interp-proj} if $\alpha$ is a constant on $\omega_z$. 
Therefore, by the assumption of $\alpha_K$ being a piecewise constant, splitting \eqref{eq:interp-f}, we have
\begin{align*}
& \big(f -\nabla\cdot \bm{\sigma}_{\mathcal{T}}, \varepsilon - \mathcal{I}\varepsilon\big)
= \big(f, \varepsilon - \mathcal{I}\varepsilon\big) - \big(\nabla\cdot (\bm{\sigma}_{\mathcal{T}} + \alpha_K\nabla u_{\mathcal{T}}), \varepsilon - \mathcal{I}\varepsilon\big)
\\
= & \sum_{z \in \mathcal{N}} \sum_{K \subset \omega_{z}}
\big(f,\left(\varepsilon-\pi_{z} \varepsilon\right) \phi_{z}\big)_{K} 
- \big(\nabla\cdot (\bm{\sigma}_{\mathcal{T}} + \alpha_K\nabla u_{\mathcal{T}}), \varepsilon - \mathcal{I}\varepsilon\big)
\\
\leq & 
\left(\mathrm{osc}(f;\mathcal{T})^2 \right)^{1/2}
\left(\sum_{z \in \mathcal{N}_1} \sum_{K\subset\omega_z} h_{z}^{-2} 
\|\alpha^{1/2}(\varepsilon-\pi_{z} \varepsilon )\phi_z\|_{K}^2\right)^{1/2}
\\
& \; + \left(\sum_{K\in\mathcal{T}} \alpha_K^{-1} h_K^{2}
\big\Vert \nabla\cdot (\bm{\sigma}_{\mathcal{T}} + \alpha_K\nabla u_{\mathcal{T}}) \big\Vert_{K}^2  \right)^{1/2}
\left(\sum_{K\in\mathcal{T}} \alpha_K h_K^{-2}\norm{\varepsilon - \mathcal{I}\varepsilon}_{K}^2  \right)^{1/2}.
\end{align*}
Applied  an inverse inequality in Lemma \ref{lem:inverse-V} on $\big\Vert \nabla\cdot (\bm{\sigma}_{\mathcal{T}} + \alpha_K\nabla u_{\mathcal{T}}) \big\Vert_{K}$ and the projection estimate for $\pi_z$ \eqref{eq:est-proj}, the rest follows the same argument with the one used in the $k\geq 2$ case.
\end{proof}

\section{Numerical examples}
\label{sec:numerics}
The numerics is prepared using the bilinear element for common AMR benchmark problems. 
The codes for this paper are publicly available on \url{https://github.com/lyc102/ifem} implemented using $i$FEM \cite{Chen:2008innovative}. The linear algebraic system on an $l$-irregular quadtree is implemented following the conforming prolongation approach  
\cite{Cerveny;Dobrev;Kolev:2019Nonconforming} by 
$\mathbf{P}^{\top}\mathbf{A} \mathbf{P}\mathbf{u} = \mathbf{P}^{\top}\mathbf{f}$, where $\mathbf{A}$ is the locally assembled stiffness matrix for all nodes in $\mathcal{N}$, $\mathbf{u}$ and $\mathbf{f}$ are the solution vector associated with $\mathcal{N}_R$ and load vector associated with $\mathcal{N}$, respectively. $\mathbf{P} = (\mathbf{I}, \mathbf{W})^{\top}: \mathbb{R}^{\dim \mathcal{N}_R}\to \mathbb{R}^{\dim \mathcal{N}}$ is a prolongation operator mapping conforming $H^1$-bilinear finite element function defined on regular nodes to all nodes, the weight matrix $\mathbf{W}$ is assembled locally by a recursive $k$NN query in $\mathcal{N}_H$, while the polygonal mesh data structure embedding is automatically built during constructing $\mathbf{P}$. For details we refer the readers to \url{https://github.com/lyc102/ifem/tree/master/research/polyFEM}.

The adaptive finite element (AFEM) iterative procedure is following the standard
\[
\texttt{SOLVE}\longrightarrow \texttt{ESTIMATE} \longrightarrow \texttt{MARK} \longrightarrow \texttt{REFINE}.
\]
 The linear system is solved by MATLAB \texttt{mldivide}. In \texttt{MARK}, the Dorfler $L^2$-marking is used with the local error indicator $\widehat{\eta}_{K}$ in that the minimum subset $\mathcal{M} \subset \mathcal{T}$ is chosen such that
\[
\sum_{K \in \mathcal{M}} 
\widehat{\eta}^{2}_{K} \geq \theta \sum_{K \in \mathcal{T}} \widehat{\eta}^{2}_{K}, \quad \text { for } \theta \in(0,1).
\]
Throughout all examples, we fix $\theta = 0.3$. $\mathcal{T}$ is refined by a red-refinement by quadsecting the marked element afterwards. For comparison, we compute the standard residual-based local indicator for 
$K\in \mathcal{T}_{\mathrm{poly}}$
\[
\eta_{\text{Residual},K}^2 := \alpha_K^{-1}h_K^2 \big\Vert f + \nabla\cdot(\alpha \nabla u_{\mathcal{T}}) \big\Vert_K^2
+  \frac{1}{2}\sum_{e\subset \partial K} \frac{h_e}{\alpha_K + \alpha_{K_e}} 
\big\Vert \jump{\alpha \nabla u_{\mathcal{T}}\cdot\bm{n}_e}{} \big\Vert_e^2,
\]
Let $\eta_{\text{Residual}}^2 = \sum_{K\in \mathcal{T}} \eta_{\text{Residual},K}^2$. The residual-based estimator $\eta_{\text{Residual}}$ is merely computed for comparison purpose and not used in marking.
The AFEM procedure stops when the relative error reaches a threshold. The effectivity indices for different estimators are compared
\[
\text{effectivity index} := {\eta}/{\big\Vert{\alpha^{1/2}\nabla \varepsilon }\big\Vert}, 
\quad \text{ where }\; \varepsilon:=u - u_{\mathcal{T}}, \; \eta = \eta_{\text{Residual}} \text{ or } \widehat{\eta},
\]
i.e., the closer to 1 the effectivity index is, the more accurate this estimator is to measure the error of interest. We use an order $5$ Gaussian quadrature to compute $\Vert{\alpha^{1/2} \nabla (u - u_{\mathcal{T}}) } \Vert$ elementwisely. 
The orders of convergence for various $\eta$'s and $\Vert{\alpha^{1/2} \nabla (u - u_{\mathcal{T}}) } \Vert$ are computed, for which $r_{\eta}$ and $r_{\text{err}}$ are defined as the slope for the linear fitting of $\ln \eta_n$ and 
$\ln \Vert\alpha^{1/2}\nabla(u - u_{\mathcal{T},n}) \Vert$ in the asymptotic regime, 
\[
\ln \eta_n \sim -r_{\eta} \ln N_n + c_1,\quad\text{and}\quad
\ln \Vert{\alpha^{1/2} \nabla (u - u_{\mathcal{T}}) } \Vert \sim -r_{\text{err}} \ln N_n + c_2,
\]
where the subscript $n$ stands for the number of iteration in the AFEM cycles, 
$N_n:= \# ( \mathcal{N}_R\backslash \mathcal{N}_{\partial \Omega})$. $r_{\eta}$ and $r_{\text{err}}$ are considered optimal when being close to $1/2$.

\subsection{L-shaped domain}
\begin{figure}[htp]
  \centering
  \begin{subfigure}[b]{0.43\linewidth}
    \centering\includegraphics[height=120pt]{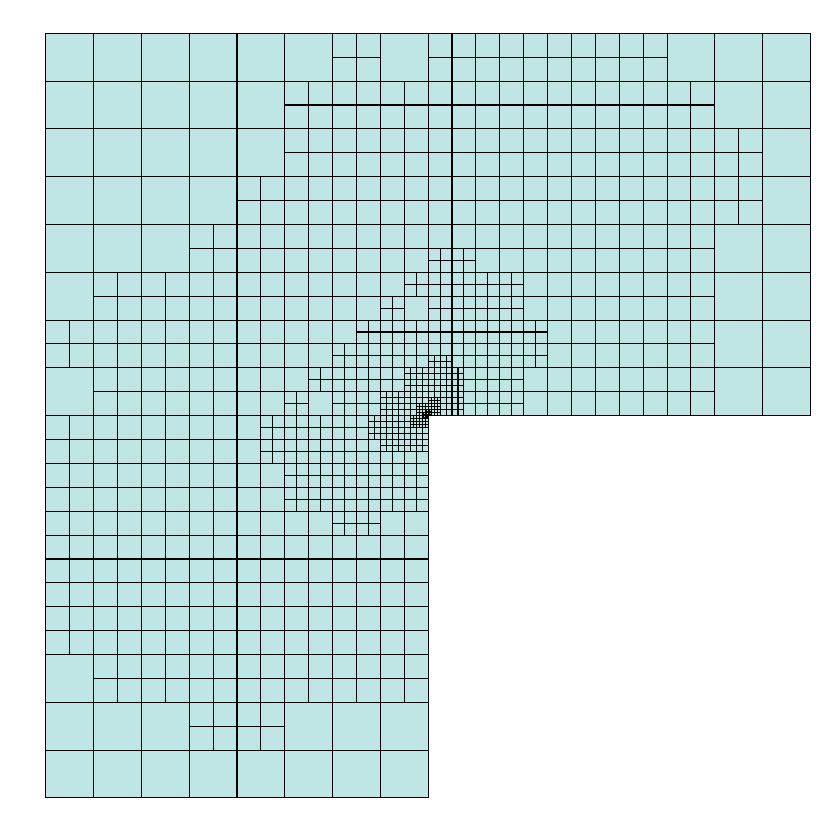}
    \caption{\label{fig:ex1-mesh}}
\end{subfigure}%
\quad
\begin{subfigure}[b]{0.5\linewidth}
      \centering
      \includegraphics[height=120pt,trim=0 0.4cm 0 0]{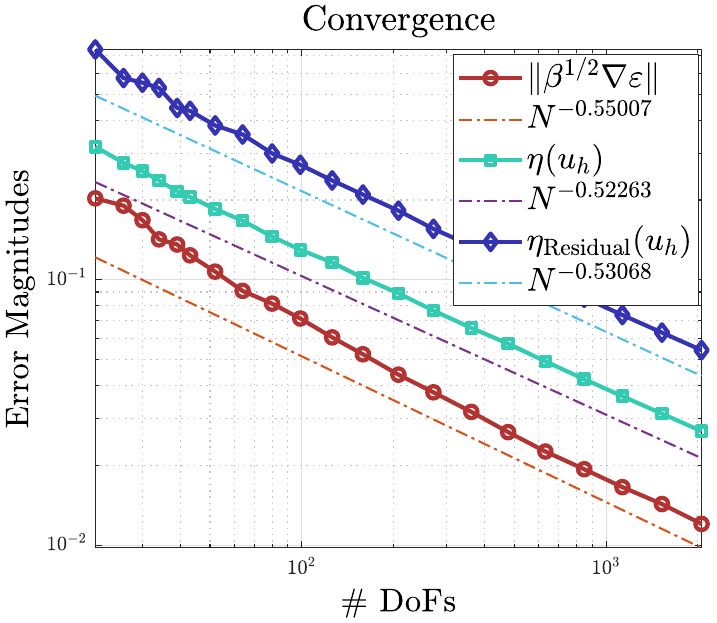}
      \caption{\label{fig:ex1-conv}}
\end{subfigure}
\caption{The result of the L-shape example. (\subref{fig:ex1-mesh}) The adaptively refined mesh with 1014 DoFs.
 (\subref{fig:ex1-conv}) Convergence in Example 1.}
\label{fig:ex1}
\end{figure}

In this example, a standard AMR benchmark on the L-shaped domain is tested. The true solution $u = r^{2/3}\sin(2\theta/3)$ in polar coordinates on $\Omega = (-1,1) \times(-1,1) \backslash[0,1) \times(-1,0]$. The AFEM procedure stops if the relative error has reached $0.01$. The adaptively refined mesh can be found in Figure \ref{fig:ex1-mesh}. While both estimators show optimal rate of convergence in Figure \ref{fig:ex1-conv}, the effectivity index for $\eta_{\mathrm{Residual}}$ is $4.52$, and is 
$2.24$ for $\widehat{\eta}$.

\subsection{A circular wave front}
The solution $u = \tan^{-1}(\alpha(r-r_0))$ is defined on $\Omega = (0,1)^2$ with $r:=\sqrt{(x+0.05)^2+(y+0.05)^2}$, $\alpha = 100$, and $r_0 = 0.7$. The true solution shows a sharp transition layer (Figure \ref{fig:ex2-soln}). The result of the convergence can be found in Figure \ref{fig:ex2-conv}. In this example, the AFEM procedure stops if the relative error has reached $0.05$. Additionally, we note that by allowing $l$-irregular ($l\geq 2$), the AMR procedure shows to be more efficient toward capturing the singularity of the solution. A simple comparison can be found in Figure \ref{fig:ex2-mesh}. The effectivity indices for $\eta_{\mathrm{Residual}}$ and $\widehat{\eta}$ are $5.49$ and $2.08$, respectively.

\begin{figure}[htp]
  \centering
\hspace*{-1cm}   
\begin{subfigure}[b]{0.5\linewidth}
  \includegraphics[width=0.99\linewidth, trim=0 0.8cm 0 0]{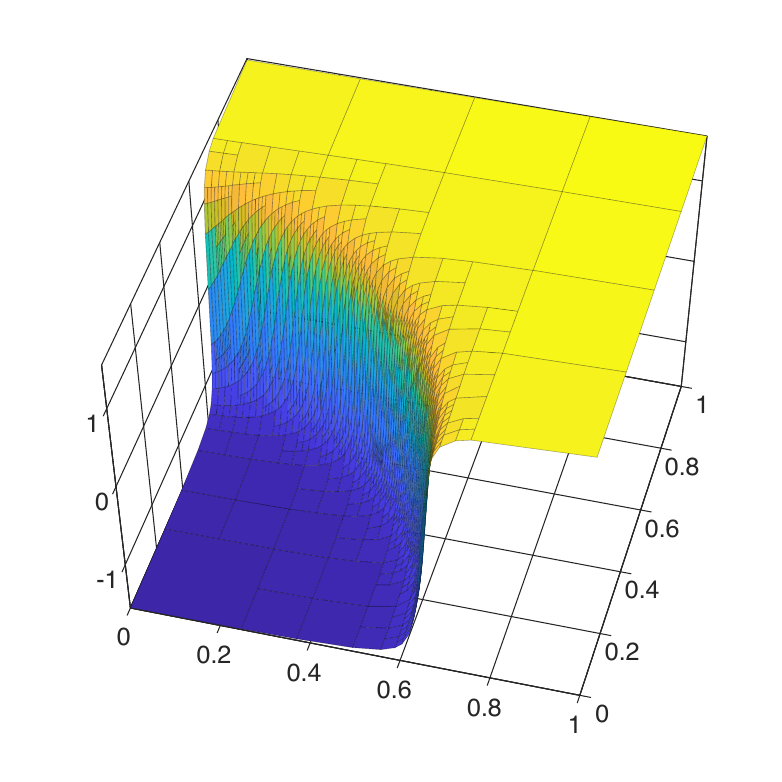}
    \caption{\label{fig:ex2-soln}}
\end{subfigure}%
\;
\begin{subfigure}[b]{0.45\linewidth}
      \centering
      \includegraphics[height=140pt]{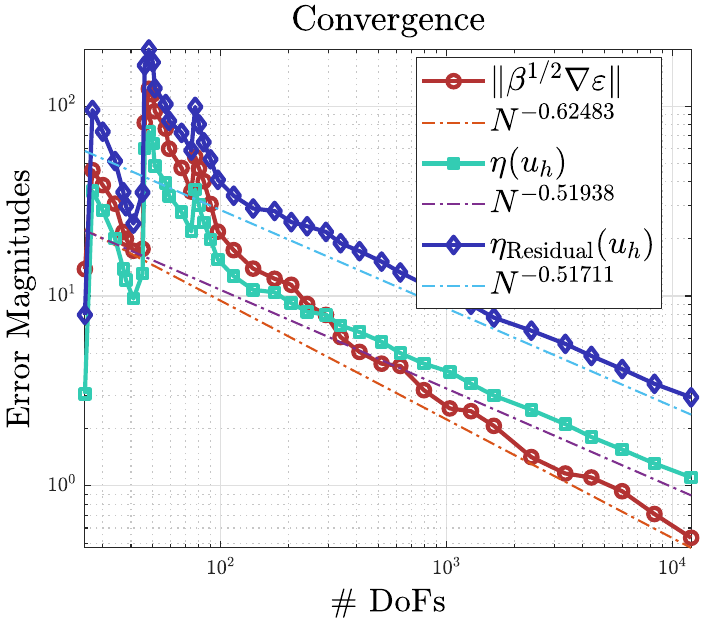}
      \caption{\label{fig:ex2-conv}}
\end{subfigure}
\caption{The result of the circular wave front example. (\subref{fig:ex2-soln}) $u_{\mathcal{T}}$ on a 3-irregular mesh with $\# \mathrm{DoFs} = 1996$, the relative error is $14.3\%$.
 (\subref{fig:ex2-conv}) Convergence in Example 2.}
\label{fig:ex2}
\end{figure}

\begin{figure}[htp]
\centering
\begin{subfigure}[b]{0.48\linewidth}
  \centering\includegraphics[width=0.99\linewidth]{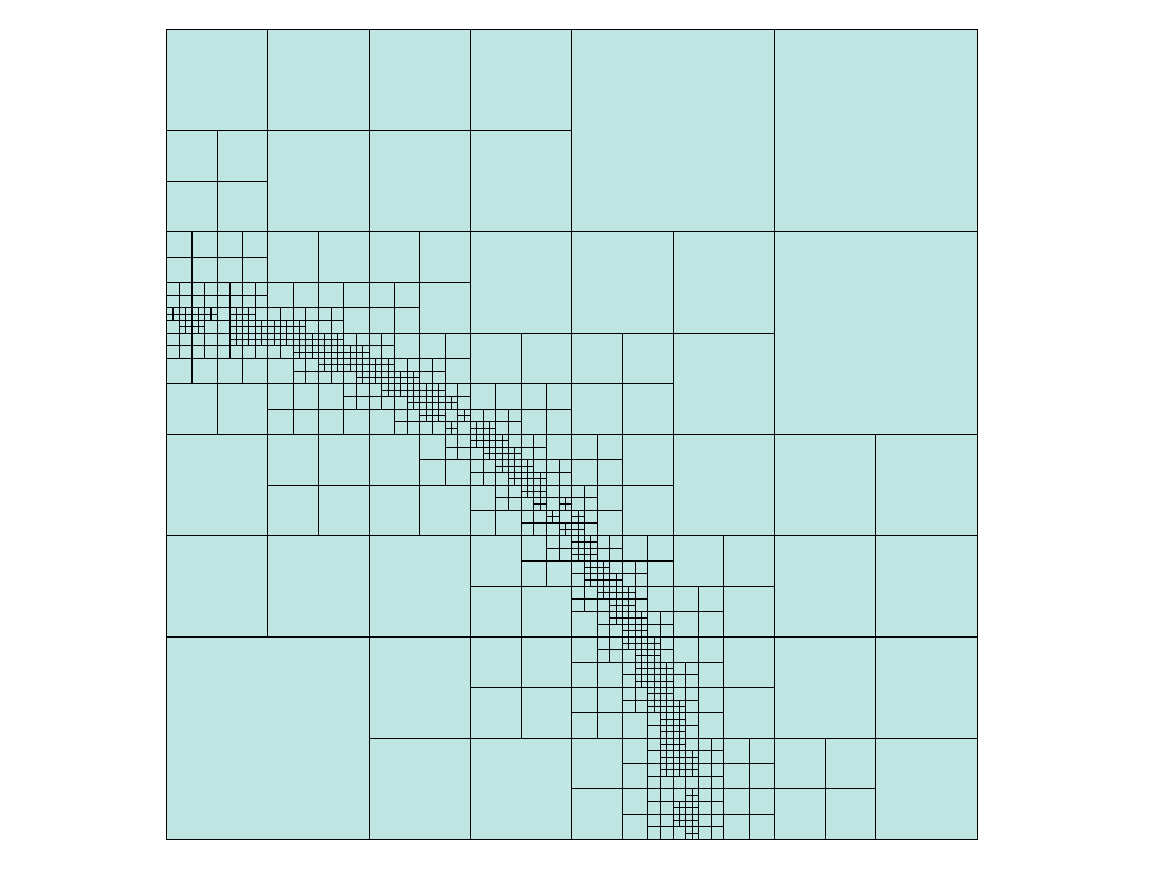}
  \caption{\label{fig:ex2-mesh1}}
\end{subfigure}%
\;
\begin{subfigure}[b]{0.48\linewidth}
      \centering
      \includegraphics[width=0.99\linewidth]{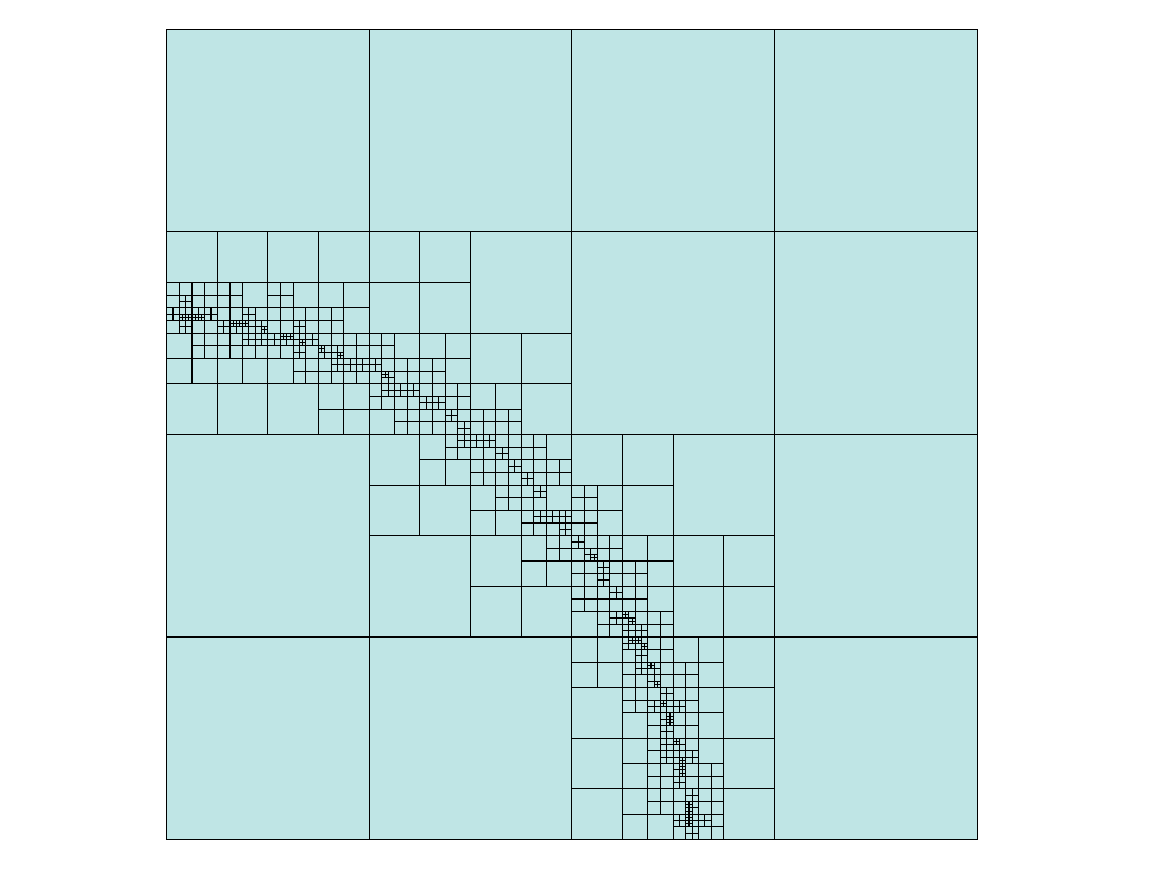}
      \caption{\label{fig:ex2-mesh3}}
\end{subfigure}
\caption{Comparison of the adaptively refined meshes. (\subref{fig:ex2-mesh1}) 1-irregular mesh, $\# \mathrm{DoFs} = 1083$, the relative error is $21.8\%$. (\subref{fig:ex2-mesh3}) 4-irregular mesh, and $\# \mathrm{DoFs} = 1000$, the relative error is $17.8\%$.}
\label{fig:ex2-mesh}
\end{figure}

\subsection{Kellogg benchmark}
This example is a common benchmark test problem introduced in \cite{Bruce-Kellogg:1974Poisson}, see also \cite{Chen;Dai:2002efficiency,Cangiani;Georgoulis;Pryer;Sutton:2017posteriori}) for elliptic interface problems. The true solution $u=r^{\gamma}\mu(\theta)$ is harmonic in four quadrants, and $\mu(\theta)$ takes different values within four quadrants:
\[
\mu(\theta)=\left\{\begin{array}{ll}
\cos ((\pi / 2-\delta) \gamma) \cdot \cos ((\theta-\pi / 2+\rho) \gamma) & \text { if } 0 \leq \theta \leq \pi / 2 
\\ 
\cos (\rho \gamma) \cdot \cos ((\theta-\pi+\delta) \gamma) & \text { if } \pi / 2 \leq \theta \leq \pi 
\\ \cos (\delta \gamma) \cdot \cos ((\theta-\pi-\rho) \gamma) & \text { if } \pi \leq \theta<3 \pi / 2 
\\ \cos ((\pi / 2-\rho) \gamma) \cdot \cos ((\theta-3 \pi / 2-\delta) \gamma) 
& \text { if } 3 \pi / 2 \leq \theta \leq 2 \pi
\end{array}\right.
\]
While $\alpha = R$ in the first and third quadrants, and $\alpha = 1$ in the second and fourth quadrants, and the true flux $\alpha \nabla u$ is glued together using $\bm{H}(\mathrm{div})$-continuity conditions. We choose the folowing set of coefficients for $u$
\[
\gamma = 0.1,\;\; R  \approx 161.4476387975881, \;\; \rho=\pi / 4,\;\; \delta \approx -14.92256510455152,
\]
By this choice, this function is very singular near the origin as the maximum regularity it has is $H^{1+\gamma}_{loc}(\Omega\backslash\{\bm{0}\})$. Through an integration by parts, it can be computed accurately that $\|\alpha^{1/2}\nabla u\| \approx 0.56501154$. For detailed formula and more possible choices of the parameters above, we refer the reader to \cite{Chen;Dai:2002efficiency}. 

The AFEM procedure for this problem stops when the relative error reaches $0.05$, and the resulting mesh and finite element approximation during the refinement can be found in Figure \ref{fig:ex3}, and the AFEM procedure shows optimal rate of convergence in Figure \ref{fig:ex3-conv}. The effectivity index for $\eta_{\mathrm{Residual}}$ is $2.95$, and $1.33$ for $\widehat{\eta}$.

\begin{figure}[h]
  \centering
  \begin{subfigure}[b]{0.4\linewidth}
    \centering\includegraphics[height=120pt]{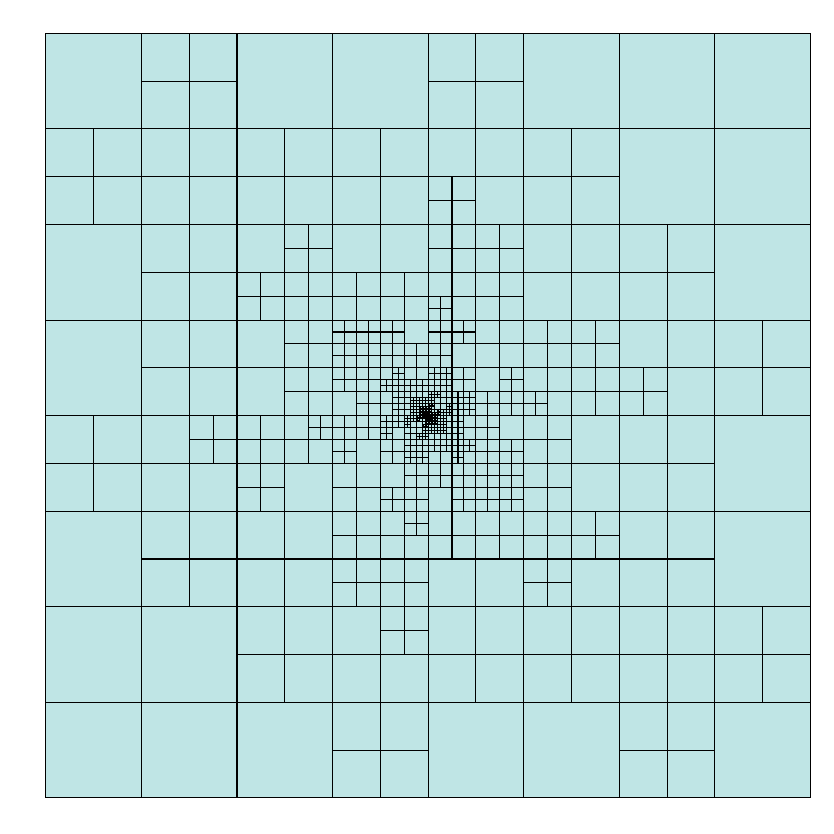}
    \caption{\label{fig:ex3-mesh}}
\end{subfigure}%
\;
\begin{subfigure}[b]{0.58\linewidth}
      \centering
      \includegraphics[height=150pt, trim=0 1.2cm 0 0]{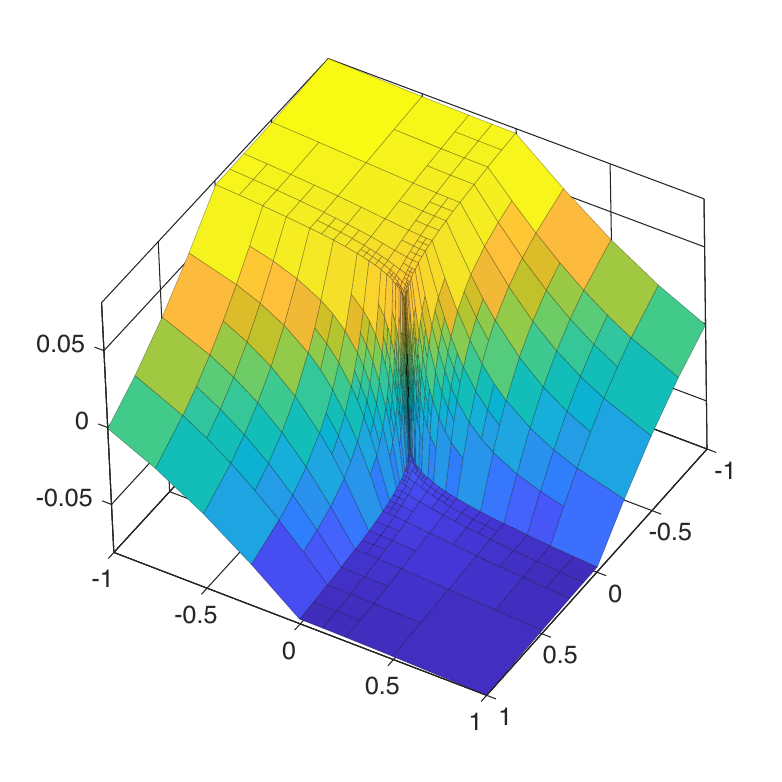}
      \caption{\label{fig:ex3-soln}}
\end{subfigure}
\caption{The result of the Kellogg example. (\subref{fig:ex3-mesh}) The adaptively refined mesh with $\# \mathrm{DoFs} = 2001$ on which the energy error is $0.0753$, this number is roughly $75\%$ of the number of DoFs needed to achieve the same accuracy if using conforming linear finite element on triangular grid (see \cite[Section 4]{Chen;Dai:2002efficiency}). (\subref{fig:ex3-soln}) The finite element approximation with $\# \mathrm{DoFs} = 1736$.}
\label{fig:ex3}
\end{figure}

\begin{figure}[h]
\centering
\includegraphics[height=140pt]{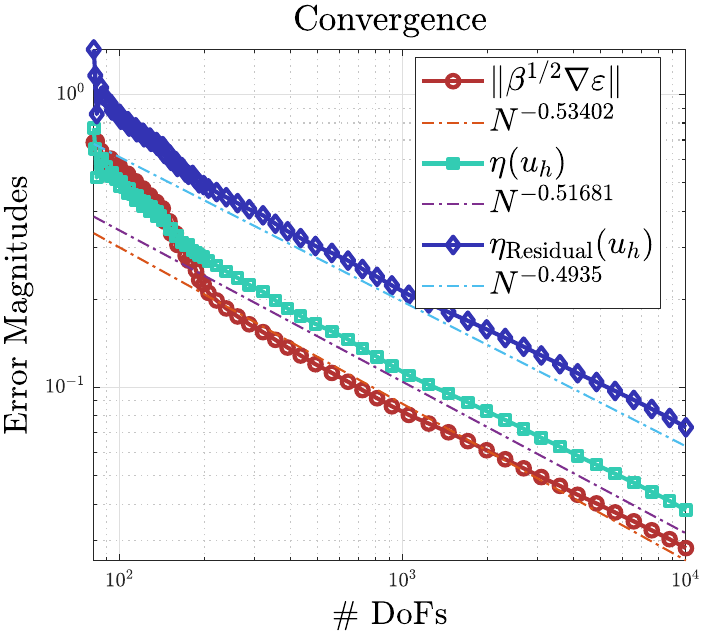}
\caption{The convergence result of the Kellogg example. }
\label{fig:ex3-conv}
\end{figure}

\section{Conclusion}

A postprocessed flux with the minimum $\bm{H}(\mathrm{div})$ continuity requirement is constructed for tensor-product type finite element. The implementation can be easily ported to finite element on quadtree to make use the vast existing finite element libraries in the engineering community. Theoretically, the local error indicator is efficient, and the global estimator is shown to be reliable under the assumptions that (i) the mesh has bounded irregularities, and (ii) the diffusion coefficient is a quasi-monotone piecewise constant. Numerically, we have observed that both the local error indicator and the global estimator are efficient and reliable (in the asymptotic regime), respectively. Moreover, the recovery-based estimator is more accurate than the residual-based one.

However, we do acknowledge that the technical tool involving interpolation is essentially limited to $1$-irregular meshes in reliability. A simple weighted averaging has restrictions and is hard to generalize to $hp$-finite elements, or discretization on curved edges/isoparametric elements. Nevertheless, we have shown that the flexibility of the virtual element framework allows further modification of the space in which we perform the flux recovery to cater the needs.

\section*{Acknowledgments} 
The author is grateful for the constructive advice from the anonymous reviewers. 

\section{Appendix}

\subsection{Inverse estimates and the norm equivalence of a virtual element function}
\label{sec:appendix-norm}
Unlike the identity matrix stabilization commonly used in most of the VEM literature, for $\bm{\tau}\in \mathcal{V}_k(K)$, we opt for a mass matrix/DoF hybrid stabilizer approach. Let $\big\Vert{ \alpha^{-1/2}\bm{\tau}}\big\Vert_{h,K}^2 := (\!(\bm{\tau}, {\bm{\tau}})\!)_{K}$ and
\begin{equation}
\label{eq:innerprod-VRT}
(\!(\bm{\sigma}, {\bm{\tau}})\!)_{K} := \big({\Pi} 
\bm{\sigma}, {\Pi} \bm{\tau} \big)_K
+  {S}_K\big(({\rm I}-{\Pi} )\bm{\sigma}, ({\rm I}-{\Pi} )\bm{\tau}\big),
\end{equation}
where $S_{K}(\cdot,\cdot)$ is defined in \eqref{eq:norm-stab}.

To show the inverse inequality and the norm equivalence used in the reliability bound, on each element, we need to introduce some geometric measures. Consider a polygonal element $K$ and an edge $e\subset \partial K$, let the height $l_e$ which measures how far from this edge $e$ one can advance to an interior subset of $K$, and denote $T_e\subset K$ as a right triangle with height $l_e$ and base as edge $e$.

\begin{proposition}
\label{prop:geometry}
Under Assumption \ref{asp:irregular}, $\mathcal{T}_{poly}$ satisfies 
(1) The number of edges in every $K\in \mathcal{T}_{poly}$ is uniformly 
bounded above. (2) For any edge $e$ on every $K$, $l_e/h_e$ is uniformly bounded below.  
\end{proposition}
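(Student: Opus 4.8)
The plan is to exploit the fact that, although we regard $K\in\mathcal{T}_{\mathrm{poly}}$ as a polygon, geometrically it is still the axis-parallel rectangle $K\in\mathcal{T}$ whose four sides have merely been subdivided by the hanging nodes that Assumption \ref{asp:irregular} permits on $\partial K$. Since by Assumption \ref{asp:irregular} the mesh $\mathcal{T}$ is obtained from a hanging-node-free mesh through repeated red-refinements (quadsection), every $K$ is similar to one of the finitely many cells of that coarse mesh; in particular its two side lengths are comparable to each other and to $h_K$, with constants depending only on the shape-regularity of the coarse mesh. I would record this observation once and use it in both parts.

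For (1): each of the four same-generation edges of the rectangle $K$ carries at most $l$ hanging nodes by Assumption \ref{asp:irregular} (at most $2^{l}-1$ if one instead reads that assumption as a bound on the refinement-level difference; in either case a constant $c_l$ depending only on $l$). The vertex set of $K\in\mathcal{T}_{\mathrm{poly}}$ consists of the four corners of the rectangle together with these hanging nodes, so $\#\mathcal{N}_K\le 4+4c_l$, and since a polygon has as many edges as vertices, the number of edges of $K$ is bounded by the same constant. This step is a pure counting argument.

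For (2): fix $e\subset\partial K$ and, without loss of generality, take $e$ to be a horizontal sub-segment $[x_1,x_2]\times\{c\}$ of the bottom side of $K=[a,b]\times[c,d]$, so that $h_e=x_2-x_1\le b-a$. Because $K$ is convex, from every point of $e$ one may advance straight upward all the way to the top side without leaving $K$, so $l_e=d-c$ and the right triangle $T_e$ with base $e$ and a vertical leg of length $l_e$ (say at $x=x_1$) is contained in $[x_1,x_2]\times[c,d]\subset K$. Combining $l_e=d-c$, $h_e\le b-a$, and the shape-regularity $b-a\simeq d-c$ of the rectangle yields $l_e/h_e\ge (d-c)/(b-a)\gtrsim 1$ uniformly; and the additional lower bound $h_e\gtrsim h_K$ coming from the finiteness of $l$ in Assumption \ref{asp:irregular} gives a matching upper bound on $l_e/h_e$, should a two-sided control of $T_e$ be desired (e.g.\ for the polygon-edge trace inequality, Lemma \ref{lem:trace}). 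Vertical edges are handled identically after swapping the two coordinate directions.

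The only point requiring genuine care is the geometric bookkeeping in (2): verifying that advancing from $e$ into $K$ is unobstructed — which reduces to convexity of the rectangle, so $l_e$ is exactly the transverse dimension of $K$ and hence $l_e\simeq h_K$ — and, where a two-sided estimate is wanted, that red-refinement cannot make $h_e/h_K$ arbitrarily small, which is precisely where the boundedness of the irregularity $l$ enters. Everything else follows directly from the uniform shape-regularity of the rectangles of $\mathcal{T}$.
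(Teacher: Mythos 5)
The paper states Proposition \ref{prop:geometry} without proof, so there is no authorial argument to compare against; your proposal supplies the natural one and it is correct. Part (1) is exactly the right counting: the polygon's vertices are the four rectangle corners plus the hanging nodes, each of the four same-generation sides carries a number of hanging nodes bounded by a constant depending only on $l$ (whether one reads Assumption \ref{asp:irregular} as ``at most $l$ per edge'' or as a level-difference bound giving $2^{l}-1$), and a polygon has as many edges as vertices. Part (2) correctly exploits that $e$ is a subsegment of a side of the axis-parallel rectangle, so the right triangle over $e$ with transverse leg equal to the full opposite dimension lies in $K$ by convexity, whence $l_e/h_e\geq (d-c)/(b-a)\gtrsim 1$ by the shape regularity that quadsection preserves from the initial hanging-node-free mesh; your extra remark that $h_e\gtrsim 2^{-l}h_K$ (hence a two-sided bound) is not required by the statement but is the correct place where boundedness of $l$ would enter if it were.
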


\begin{lemma}[Trace inequality on small edges \cite{Cao;Chen:2019Anisotropic}]
\label{lem:trace}
If Proposition \ref{prop:geometry} holds, for $v \in H^1(K)$ and $K\in \mathcal{T}_{\mathrm{poly}}$ we have
\begin{equation}\label{eq:trace}
h_e^{-1/2}\norm{v}_{e} \lesssim h_K^{-1} \norm{v}_{K} + \norm{\nabla v}_{K}, \quad \text{ on } \;e\subset K.
\end{equation}  
\end{lemma}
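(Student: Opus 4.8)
The plan is to transplant a scaled trace inequality from the small triangle $T_e\subset K$ furnished by Proposition \ref{prop:geometry} onto the edge $e$, and then convert the resulting $h_e$-weight into an $h_K$-weight using the bounded irregularity. First I would fix, for each $e\subset\partial K$, the right triangle $T_e\subset K$ with base $e$ and height $l_e$. By part (2) of Proposition \ref{prop:geometry} we have $l_e/h_e\gtrsim 1$, and on a quadtree square element one may in fact take $l_e\simeq h_e$ (a right isosceles triangle with legs of length $h_e$ sits inside $K$), so that $T_e$ is a shape-regular triangle of diameter $\simeq h_e$. Transporting the trace theorem on a fixed reference triangle to $T_e$ by an affine map with linear part of size $\simeq h_e$ then gives
\[
\norm{v}_e^2 \;\lesssim\; h_e^{-1}\norm{v}_{T_e}^2 + h_e\,\norm{\nabla v}_{T_e}^2 ,
\]
which is the ``trace on small edges'' estimate \cite{Cao;Chen:2019Anisotropic}; in the genuinely anisotropic case $l_e\gg h_e$ one instead keeps both singular values $h_e,l_e$ of the affine map and checks that the constant still depends only on the lower bound for $l_e/h_e$.

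Next I would use the inclusion $T_e\subset K$ to replace $\norm{v}_{T_e}$ and $\norm{\nabla v}_{T_e}$ by $\norm{v}_K$ and $\norm{\nabla v}_K$, which already yields $h_e^{-1/2}\norm{v}_e \lesssim h_e^{-1}\norm{v}_K + \norm{\nabla v}_K$. To obtain the stated inequality, with $h_K^{-1}$ in place of $h_e^{-1}$, I would invoke Assumption \ref{asp:irregular}: the refinement levels of $K$ and any neighbour differ by at most $l$, so every side of the square $K$ is subdivided into pieces of length $\gtrsim 2^{-l}h_K$, i.e.\ $h_e\gtrsim 2^{-l}h_K$ for every $e\subset\partial K$. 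Absorbing this factor into the constant — equivalently, letting the constant depend on the uniformly bounded number of edges of $K\in\mathcal{T}_{\mathrm{poly}}$, which is part (1) of Proposition \ref{prop:geometry} — completes the argument.

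The step that needs care is the first displayed bound: the diameter of $T_e$ is $\simeq h_e$ rather than $\simeq h_K$, so it cannot be read off the trace inequality attached to the element $K$ itself; instead one localizes to $T_e$ and tracks the change of variables there — precisely the content of \cite{Cao;Chen:2019Anisotropic} — and Proposition \ref{prop:geometry} supplies the geometric hypotheses this needs. Beyond this bookkeeping I do not anticipate real difficulty; the constant is permitted to depend on $l$ (equivalently, on the bounded number of edges of $K\in\mathcal{T}_{\mathrm{poly}}$), consistent with the dependence recorded in Theorem \ref{thm:efficiency}.
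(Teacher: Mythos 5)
Your proof is correct and follows essentially the same route as the paper's: a scaled trace inequality on $e$ toward the triangle $T_e$ supplied by Proposition \ref{prop:geometry}, followed by enlarging $T_e$ to $K$. You merely make explicit the step $h_e \gtrsim 2^{-l} h_K$ that the paper leaves implicit when trading $h_e^{-1}\norm{v}_{T_e}$ for $h_K^{-1}\norm{v}_{K}$.
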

\begin{proof}
The proof follows essentially equation (3.9) in \cite[Lemma 3.3]{Cao;Chen:2019Anisotropic} as a standard scaled trace inequality on $e$ toward $T_e$ reads
\[
h_e^{-1/2}\norm{v}_{e} \lesssim h_e^{-1} \norm{v}_{T_e} + \norm{\nabla v}_{T_e} 
\lesssim h_K^{-1} \norm{v}_{K} + \norm{\nabla v}_{K}.
\]
\end{proof}

\begin{lemma}[Inverse inequalities]
\label{lem:inverse-V}
Under Assumption \ref{asp:irregular}, we have the following inverse estimates for $\bm{\tau} \in \mathcal{V}_k(K)$ \eqref{eq:space-vhdiv} on any $K\in \mathcal{T}_{poly}$ with constants depending on $k$ and the number of edges in $K$:
\begin{equation}
\label{eq:inverse-V}
 \|\nabla \cdot \bm{\tau}\|_K \lesssim h_K^{-1} \|\bm{\tau}\|_K, \quad 
\text{ and } \quad \|\nabla \cdot \bm{\tau}\|_K  \lesssim h_K^{-1} S_K\big(\bm{\tau},\bm{\tau}\big)^{1/2}.
\end{equation}
\end{lemma}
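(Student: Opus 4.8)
The plan is to prove both inequalities of \eqref{eq:inverse-V} by a scaling argument that reduces them to a statement on a reference configuration, where $\mathcal{V}_k$ is finite-dimensional and all norms — and all seminorms that happen to be norms — are equivalent. Fix $K\in\mathcal{T}_{\mathrm{poly}}$ and let $\widehat{K}:=h_K^{-1}(K-\bx_K)$ be the \emph{isotropic} rescaling of $K$ about its center $\bx_K$, with pull-back $\widehat{\bm\tau}(\widehat{\bx}):=\bm\tau(\bx_K+h_K\widehat{\bx})$. Since the dilation is isotropic, $\widehat{\bm\tau}\mapsto\bm\tau$ is a bijection of $\mathcal{V}_k(\widehat{K})$ onto $\mathcal{V}_k(K)$: it preserves $\hdiv$, the constraint $\nabla\times\bm\tau=0$, the degree of $\nabla\cdot\bm\tau$, and the degree of the edge traces $\bm\tau\cdot\bn_e$. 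A change of variables, together with the fact that the scaled monomials $m_\alpha$ on $K$ pull back exactly to those on $\widehat{K}$, gives
\[
\|\bm\tau\|_K=h_K\|\widehat{\bm\tau}\|_{\widehat{K}},\qquad
\|\nabla\cdot\bm\tau\|_K=\|\nabla\cdot\widehat{\bm\tau}\|_{\widehat{K}},\qquad
S_K(\bm\tau,\bm\tau)=h_K^{2}\,S_{\widehat{K}}(\widehat{\bm\tau},\widehat{\bm\tau}),
\]
the divergence on $\widehat{K}$ being taken in the rescaled variable. Hence it suffices to prove $\|\nabla\cdot\widehat{\bm\tau}\|_{\widehat{K}}\lesssim\|\widehat{\bm\tau}\|_{\widehat{K}}$ and $\|\nabla\cdot\widehat{\bm\tau}\|_{\widehat{K}}\lesssim S_{\widehat{K}}(\widehat{\bm\tau},\widehat{\bm\tau})^{1/2}$.

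On $\widehat{K}$ the space $\mathcal{V}_k(\widehat{K})$ is finite-dimensional: because $\nabla\times\widehat{\bm\tau}=0$ on the simply connected $\widehat{K}$ we may write $\widehat{\bm\tau}=\nabla\varphi$, and $\varphi$ is determined modulo a constant by $\Delta\varphi=\nabla\cdot\widehat{\bm\tau}\in\mathbb{P}_{k-1}(\widehat{K})$ and $\partial_n\varphi|_e=\widehat{\bm\tau}\cdot\bn_e\in\mathbb{P}_k(e)$ on the (boundedly many) edges, so $\mathcal{V}_k(\widehat K)$ depends on finitely many parameters. On this space $\|\cdot\|_{\widehat{K}}$ is plainly a norm, and $S_{\widehat{K}}(\cdot,\cdot)^{1/2}$ is also a norm: if $S_{\widehat{K}}(\widehat{\bm\tau},\widehat{\bm\tau})=0$ then $\widehat{\bm\tau}\cdot\bn_e=0$ on every edge and $(\widehat{\bm\tau},\nabla m)_{\widehat{K}}=0$ for all $m\in\mathbb{P}_{k-1}(\widehat{K})$, whence the divergence theorem gives $\nabla\cdot\widehat{\bm\tau}\perp\mathbb{P}_{k-1}(\widehat{K})$ and therefore $\nabla\cdot\widehat{\bm\tau}=0$; then $\varphi$ is harmonic with vanishing Neumann data, hence constant, so $\widehat{\bm\tau}=0$. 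Since $\widehat{\bm\tau}\mapsto\|\nabla\cdot\widehat{\bm\tau}\|_{\widehat{K}}$ is a seminorm on the finite-dimensional $\mathcal{V}_k(\widehat{K})$, both bounds follow from equivalence of norms, with a constant depending only on $k$ and $\widehat{K}$.

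It then remains to absorb the dependence on $\widehat{K}$ into a uniform one. By Assumption \ref{asp:irregular} and Proposition \ref{prop:geometry} the rescaled elements $\widehat{K}$ range over a compact family of shapes — rectangles of uniformly bounded aspect ratio carrying at most $l$ hanging nodes per edge at dyadic positions — on which the norm-equivalence constant of the previous step stays bounded, depending only on $k$ and (through the admissible shapes) the maximal number of edges of an element of $\mathcal{T}_{\mathrm{poly}}$. Undoing the scaling with the three identities above yields \eqref{eq:inverse-V}. I expect this last, bookkeeping-flavored step to be the main obstacle: the elements of $\mathcal{T}_{\mathrm{poly}}$ are not affine images of a single reference polygon, so one cannot pull back once and for all, and one must be careful to use only the isotropic dilation — a genuinely non-isotropic affine map would destroy $\nabla\times\bm\tau=0$ and push $\bm\tau$ out of $\mathcal{V}_k$.

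As a side remark, the second estimate in \eqref{eq:inverse-V} also admits a direct, constant-explicit proof that avoids finite dimensionality: writing $q:=\nabla\cdot\bm\tau\in\mathbb{P}_{k-1}(K)$ and integrating by parts, $\|q\|_K^2=-(\bm\tau,\nabla q)_K+\sum_{e\subset\partial K}(\bm\tau\cdot\bn_e,q)_e$, one bounds the bulk term by expanding $\nabla q$ in the basis $\{\nabla m_\alpha\}$ of $\nabla\mathbb{P}_{k-1}(K)$ and the boundary terms by $\|\bm\tau\cdot\bn_e\|_e\le h_e^{-1/2}S_K(\bm\tau,\bm\tau)^{1/2}$ together with the polynomial trace bound $\|q\|_e\lesssim h_e^{1/2}h_K^{-1}\|q\|_K$ coming from Lemma \ref{lem:trace}; dividing through by $\|q\|_K$ gives the claim, while the first estimate seems to genuinely require the reference-element argument above.
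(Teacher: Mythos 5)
Your proposal reaches the right conclusion but by a genuinely different route from the paper. The paper never passes to a reference configuration: it proves the first bound in \eqref{eq:inverse-V} directly by a bubble-function duality argument on the triangle $T_{e'}$ over the longest edge of $\partial K$ (writing $\|\nabla\cdot\bm{\tau}\|_K^2\lesssim(\nabla\cdot\bm{\tau},pb_K)=-(\bm{\tau},\nabla(pb_K))$ and using $\|b_K\|_{\infty}=O(1)$, $\|\nabla b_K\|_{\infty}=O(h_K^{-1})$), and the second bound by integration by parts, expanding $p=\nabla\cdot\bm{\tau}$ in the scaled monomial basis, bounding the coefficient vector via the mass-matrix estimate $\|p\|_K^2\gtrsim h_K^2\|\mathbf{p}\|_{\ell^2}^2$ (which exploits that $K$ is geometrically a rectangle), and invoking the small-edge trace inequality of Lemma \ref{lem:trace}. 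Your closing ``side remark'' is in fact essentially the paper's proof of the second inequality, while your assertion that the first estimate ``genuinely requires'' a reference-element argument is not true --- the bubble-function argument dispatches it with explicit constants. What your compactness route buys is uniformity of treatment (both bounds follow at once from equivalence of norms on the finite-dimensional $\mathcal{V}_k(\widehat{K})$, and your verification that $S_{\widehat{K}}(\cdot,\cdot)^{1/2}$ is a norm is exactly the unisolvency argument the paper runs inside Lemma \ref{lem:normeq-V}); what it costs is any control over how the constant depends on the geometry.

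The one step you must firm up is the uniformity over shapes, which you correctly flag but leave as an assertion. As written, ``the rescaled elements range over a compact family on which the equivalence constant stays bounded'' is not a proof: the space $\mathcal{V}_k(\widehat{K})$ itself changes with $\widehat{K}$, so even on a compact family one needs an argument that the equivalence constants vary continuously (or at least stay bounded), and such continuity statements for shape-dependent nonpolynomial spaces are delicate. The gap is closable here because under Assumption \ref{asp:irregular} the rescaled shapes form a \emph{finite} set --- the hanging nodes sit at finitely many dyadic positions once the level difference across an edge is bounded by $l$ --- so the constant is trivially uniform; but it then depends on $l$ and the whole admissible shape family rather than merely on ``$k$ and the number of edges'' as you claim, and you should say so explicitly. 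Your scaling identities and the observation that only the isotropic dilation preserves the constraint $\nabla\times\bm{\tau}=0$ are correct.
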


\begin{proof}
The first inequality in \eqref{eq:inverse-V} can be shown using a bubble function trick. Choose $b_K$ be a bubble function of $T_{e'}$ where $e'$ is the longest edge on $\partial K$. Denote $p := \nabla \cdot \bm{\tau} \in \mathbb{P}_{k-1}(K)$, we have
\[
\|\nabla \cdot \bm{\tau}\|_K^2 \lesssim (\nabla \cdot \bm{\tau}, p b_K)
= -(\bm{\tau}, \nabla (p b_K)) \leq \norm{\bm{\tau}}_K \norm{ \nabla (p b_K)}_K,
\]
and then $\norm{ \nabla (p b_K)}$ can be estimated as follows
\[
\norm{ \nabla (p b_K)} \leq \norm{ b_K \nabla p }_K + \norm{p\nabla b_K}_K
\leq \norm{ b_K }_{\infty,\Omega} \norm{\nabla p }_K + \norm{p}_K \norm{\nabla b_K}_{\infty,K}.
\]
Consequently, the first inequality in \eqref{eq:inverse-V} follows above by the standard inverse estimate for polynomials $\norm{\nabla p}_K\lesssim h_K^{-1} \norm{p}_K$, and the properties of the bubble function $\norm{b_K}_{\infty, K} = O(1)$, and $\norm{\nabla b_K}_{ \infty, K} = O(h_K^{-1})$.

To prove the second inequality in \eqref{eq:inverse-V}, by integration by parts we have
\begin{equation}
\label{eq:inverse-intbyparts}
\norm{\nabla\cdot\bm{\tau}}^2 = (\nabla\cdot\bm{\tau}, p) = -(\bm{\tau},\nabla p)
+ \sum_{e\subset\partial K} (\bm{\tau}\cdot \bm{n}_e, p).  
\end{equation}
Expand $\nabla\cdot \bm{\tau} = p$ in the monomial basis $p(\bm{x}) = \sum_{\alpha\in \Lambda} p_{\alpha} m_{\alpha}(\bm{x})$, and denote the mass matrix $\mathbf{M}: = \big( (m_{\alpha}, m_{\gamma})_{K} \big)_{\alpha\gamma}$, $\mathbf{p}:= (p_{\alpha})_{\alpha\in \Lambda}$, it is straightforward to see that 
\begin{equation}
\label{eq:inverse-scaling}
 \norm{p}_K^2 = \mathbf{p}^{\top} \mathbf{M} \mathbf{p} 
\geq \mathbf{p}^{\top} \operatorname{diag}(\mathbf{M}) \mathbf{p} \geq \min_j \mathbf{M}_{jj}\norm{\mathbf{p}}_{\ell^2}^2
\simeq h_K^2 \norm{\mathbf{p}}_{\ell^2}^2, 
\end{equation}
since $\int_K (x-x_K)^l (y-y_K)^m\,\mathrm{d} x\mathrm{d} y\geq 0$ for the off-diagonal entries of $\mathbf{M}$ due to $K$ being geometrically a rectangle (with additional vertices). As a result, applying the trace inequality in Lemma \ref{lem:trace} on \eqref{eq:inverse-intbyparts} yields
\[
\begin{aligned}
\norm{\nabla\cdot\bm{\tau}}^2
& \leq 
\left(\sum_{\alpha\in \Lambda} (\bm{\tau}, m_{\alpha})_K^2 \right)^{1/2} 
\left(\sum_{\alpha\in \Lambda} p_{\alpha}^2 \right)^{1/2} 
\\
& \quad + 
\left(\sum_{e\subset \partial K} h_e \norm{\bm{\tau}\cdot \bm{n}_e}_e^2 \right)^{1/2}
\left(\sum_{e\subset \partial K} h_e^{-1} \norm{p}_e^2 \right)^{1/2} 
\\
& \lesssim S_K(\bm{\tau},\bm{\tau})^{1/2} \left(\norm{\mathbf{p}}_{\ell^2} + h_K^{-1} \norm{p}_K + \norm{\nabla p}_K \right).
\end{aligned}
\]
As a result, the second inequality in \eqref{eq:inverse-V} is proved when apply an inverse inequality for $\norm{\nabla p}_K$ and estimate \eqref{eq:inverse-scaling}.
\end{proof}

\begin{remark}
While the proof in Lemma \ref{lem:inverse-V} relies on $K$ being a rectangle, the result holds for a much broader class of polygons by changing the basis of $\mathbb{P}_{k-1}(K)$ from the simple scaled monomials to quasi-orthogonal ones in \cite{Mascotto:2017Ill-conditioning,Berrone;Borio:2017Orthogonal} and apply the isotropic polygon scaling result in \cite{Cao;Chen:2019Anisotropic}. 
\end{remark}

\begin{lemma}[Norm equivalence]
\label{lem:normeq-V}
Under Assumption \ref{asp:irregular}, let ${\Pi}$ be the oblique projection defined in \eqref{eq:sigma-proj}, then the following relations holds for $\bm{\tau} \in \mathcal{V}_k(K)$ \eqref{eq:space-vhdiv} on any $K\in \mathcal{T}_{poly}$:
\begin{equation}
\label{eq:normeq-V}
\gamma_* \Vert{\bm{\tau}}\Vert_K \leq  
\Vert{ \bm{\tau}}\Vert_{h,K} \leq 
\gamma^*\Vert{\bm{\tau}}\Vert_K,
\end{equation}
where both $\gamma_*$ and $\gamma^*$ depends  on $k$ and the number of edges in $K$.
\end{lemma}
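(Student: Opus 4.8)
\emph{Plan.} The plan is to reduce \eqref{eq:normeq-V} to a single scale-invariant estimate on $\mathcal{V}_k(K)$ and then to prove that estimate by an $\bm{H}(\mathrm{div})$-duality argument, using only Proposition \ref{prop:geometry} and Lemmas \ref{lem:trace} and \ref{lem:inverse-V}. First, note that $\Pi$ in \eqref{eq:sigma-proj} is the $\bm{L}^2(K)$-orthogonal projection onto $\nabla\mathbb{P}_k(K)$, and that $\nabla\mathbb{P}_k(K)\subset\mathcal{V}_k(K)$: for $p\in\mathbb{P}_k(K)$ one has $\nabla\cdot\nabla p=\Delta p\in\mathbb{P}_{k-1}(K)$, $\nabla\times\nabla p=0$, and $\nabla p\cdot\bn_e\in\mathbb{P}_{k-1}(e)\subset\mathbb{P}_k(e)$. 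Hence for $\bm{\tau}\in\mathcal{V}_k(K)$ the decomposition $\bm{\tau}=\Pi\bm{\tau}+\bm{w}$ with $\bm{w}:=(\mathrm{I}-\Pi)\bm{\tau}\in\mathcal{V}_k(K)$ is $\bm{L}^2(K)$-orthogonal, so $\norm{\bm{\tau}}_K^2=\norm{\Pi\bm{\tau}}_K^2+\norm{\bm{w}}_K^2$, while by \eqref{eq:innerprod-VRT} one has $\norm{\bm{\tau}}_{h,K}^2=\norm{\Pi\bm{\tau}}_K^2+S_K(\bm{w},\bm{w})$. Consequently the lemma follows (with $\gamma_*$ and $\gamma^*$ obtained by combining with the trivial relation on the $\Pi\bm\tau$-component) once I establish
\[
S_K(\bm{w},\bm{w})\simeq\norm{\bm{w}}_K^2\qquad\text{for every }\bm{w}\in\mathcal{V}_k(K),
\]
with constants depending only on $k$ and the number of edges of $K$.

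For the upper bound $S_K(\bm{w},\bm{w})\lesssim\norm{\bm{w}}_K^2$, the interior terms of \eqref{eq:norm-stab} are handled by Cauchy--Schwarz and $\norm{\nabla m_\alpha}_K\lesssim 1$. For an edge $e$, set $\phi_e:=\bm{w}\cdot\bn_e\in\mathbb{P}_k(e)$ and pick, on the triangle $T_e\subset K$ of Proposition \ref{prop:geometry}, any $\Phi_e\in H^1(T_e)$ with $\Phi_e|_e=\phi_e$, $\Phi_e=0$ on $\partial T_e\setminus e$, and $\norm{\Phi_e}_{T_e}\lesssim h_e^{1/2}\norm{\phi_e}_e$, $\norm{\nabla\Phi_e}_{T_e}\lesssim h_e^{-1/2}\norm{\phi_e}_e$ (a constant-in-the-normal-direction extension of the polynomial $\phi_e$ cut off to vanish on the other two sides; the gradient bound uses a polynomial inverse estimate and $l_e\simeq h_e$). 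Since $\bm{w}\in\bm{H}(\mathrm{div};K)$ and $T_e\subset K$, Green's formula on $T_e$ with the boundary conditions on $\Phi_e$ gives
\[
\norm{\phi_e}_e^2=\Big|\int_{T_e}\big[(\nabla\cdot\bm{w})\,\Phi_e+\bm{w}\cdot\nabla\Phi_e\big]\dd\bm{x}\Big|\le\norm{\nabla\cdot\bm{w}}_K\,\norm{\Phi_e}_{T_e}+\norm{\bm{w}}_K\,\norm{\nabla\Phi_e}_{T_e},
\]
so $h_e\norm{\phi_e}_e^2\lesssim h_e^2\norm{\nabla\cdot\bm{w}}_K^2+\norm{\bm{w}}_K^2\lesssim\norm{\bm{w}}_K^2$ by $h_e\le h_K$ and the first inverse inequality of Lemma \ref{lem:inverse-V}; summing over the uniformly bounded number of edges finishes this direction.

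For the lower bound $\norm{\bm{w}}_K^2\lesssim S_K(\bm{w},\bm{w})$, since $\nabla\times\bm{w}=0$ on the simply connected $K$ I write $\bm{w}=\nabla\psi$ with $\psi\in H^1(K)$ and let $q$ be the mean value of $\psi$ on $K$. The Payne--Weinberger inequality on the convex $K$ gives $\norm{\psi-q}_K\lesssim h_K\norm{\nabla\psi}_K$, and Lemma \ref{lem:trace} on each edge gives $\norm{\psi-q}_e\lesssim h_e^{1/2}\big(h_K^{-1}\norm{\psi-q}_K+\norm{\nabla\psi}_K\big)\lesssim h_e^{1/2}\norm{\nabla\psi}_K$. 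Integrating by parts (legitimate since $\nabla\cdot\bm{w}\in\mathbb{P}_{k-1}(K)$ and $\bm{w}\cdot\bn_e\in\mathbb{P}_k(e)$) and using $\nabla q=0$,
\[
\begin{aligned}
\norm{\bm{w}}_K^2&=\big(\nabla\psi,\nabla(\psi-q)\big)_K=-\big(\nabla\cdot\bm{w},\psi-q\big)_K+\sum_{e\subset\partial K}\big(\bm{w}\cdot\bn_e,\psi-q\big)_e
\\
&\lesssim\Big(h_K\norm{\nabla\cdot\bm{w}}_K+\big(\textstyle\sum_{e}h_e\norm{\bm{w}\cdot\bn_e}_e^2\big)^{1/2}\Big)\norm{\bm{w}}_K,
\end{aligned}
\]
the boundary sum being estimated edge by edge (picking up only a $\sqrt{\#\mathrm{edges}}$ factor). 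Dividing by $\norm{\bm{w}}_K=\norm{\nabla\psi}_K$, and using the second inverse inequality of Lemma \ref{lem:inverse-V}, $h_K\norm{\nabla\cdot\bm{w}}_K\lesssim S_K(\bm{w},\bm{w})^{1/2}$, together with $\sum_{e}h_e\norm{\bm{w}\cdot\bn_e}_e^2\le S_K(\bm{w},\bm{w})$, yields $\norm{\bm{w}}_K\lesssim S_K(\bm{w},\bm{w})^{1/2}$.

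The hard part is the edge-continuity bound: a generic $\bm{w}\in\mathcal{V}_k(K)$ is only guaranteed to lie in $\bm{H}(\mathrm{div};K)\cap\bm{H}(\mathbf{rot};K)$ on the convex but possibly many-sided element $K$, and its potential $\psi$ may be barely better than $H^1$ near the vertices of $K$ where the polynomial Neumann datum $\bm{w}\cdot\bm{n}$ jumps, so a trace inequality cannot be applied to $\bm{w}$ directly — this is precisely why the estimate is routed through $\bm{H}(\mathrm{div})$-duality against the auxiliary polynomial bump $\Phi_e$ on $T_e$. Once this is in place, checking that every constant depends only on $k$, the number of edges, and the shape-regularity furnished by Proposition \ref{prop:geometry} (hence Assumption \ref{asp:irregular}) is routine bookkeeping.
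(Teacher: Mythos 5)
Your proof is correct and follows essentially the same route as the paper's: both reduce the claim to the two-sided comparison of $S_K(\cdot,\cdot)$ with the $\bm{L}^2(K)$-norm on $\mathcal{V}_k(K)$, proved with the same ingredients (a normal-extension/bubble lifting on $T_e$ plus Green's formula and Lemma \ref{lem:inverse-V} for one direction; the potential $\bm{w}=\nabla\psi$, Lemma \ref{lem:trace}, the Poincar\'e inequality and Lemma \ref{lem:inverse-V} for the other). The only cosmetic differences are that you exploit the fact that $\Pi$ is an orthogonal projection onto $\nabla\mathbb{P}_k(K)\subset\mathcal{V}_k(K)$ to get a Pythagorean reduction where the paper uses the triangle inequality, and you invoke simple-connectedness of $K$ directly to write $\bm{w}=\nabla\psi$ instead of routing through the paper's auxiliary Neumann problem and Helmholtz decomposition (which the paper also uses to record unisolvency).
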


\begin{proof}
First we consider the lower bound, by triangle inequality,
\[
\Vert{\bm{\tau}}\Vert_{K}\leq 
\big\Vert{{\Pi}\bm{\tau}}\big\Vert_{K} + 
\big\Vert{(\bm{\tau} - {\Pi}\bm{\tau}) }\big\Vert_{K}.
\]
Since ${\Pi}\bm{\tau} \in \mathcal{V}^{k}(K)$, it suffices to 
establish the following to prove the lower bound in \eqref{eq:normeq-V} 
\begin{equation}
\label{eq:normeq-K}
\Vert{\bm{\tau} }\Vert_{K}^2
\leq S_K\big(\bm{\tau},\bm{\tau}\big), \quad \text{ for } 
\bm{\tau}\in \mathcal{V}_k(K).
\end{equation}
To this end, we consider the weak solution to the following auxiliary boundary value 
problem 
on $K$: 
\begin{equation}
\label{eq:normeq-aux}
  \left\{
\begin{aligned}
\Delta \psi &= \nabla\cdot \bm{\tau}&\text{ in } K,
\\
\frac{\partial \psi}{\partial n}  &= 
\bm{\tau} \cdot\bm{n}_{\partial K}  &\text{ on }\partial K.
\end{aligned}
\right.
\end{equation}

By a standard Helmholtz decomposition result (e.g. Proposition 3.1, Chapter 1\cite{Girault;Raviart:1986Finite}), 
we have $\bm{\tau} -\nabla \psi = \nabla^{\perp} \phi$. Moreover, since on $\partial K$, 
$0 = \nabla^{\perp} \phi \cdot \bm{n} = \nabla \phi \cdot \bm{t} = \partial \phi/\partial s$, we can further choose $\phi\in H^1_0(K)$. As a result, by the assumption that $\nabla\times \bm{\tau} = 0$ for $\bm{\tau}$ in the modified virtual element space \eqref{eq:space-vhdiv}, we can verify that
\[
\norm{\bm{\tau} - \nabla \psi}_K^2 = (\bm{\tau} - \nabla \psi,  \nabla^{\perp} \phi) = 0.
\]
Consequently, we proved essentially the unisolvency of the modified VEM space \eqref{eq:space-vhdiv} and $\bm{\tau} = \nabla \psi$. We further note that $\psi$ in \eqref{eq:normeq-aux} can be chosen in $H^1(K)/\mathbb{R}$
and thus%
\begin{equation}
\label{eq:normeq-intbyparts}
\begin{aligned}
& \big\Vert{\bm{\tau} }\big\Vert_{K}^2 = (\bm{\tau}, \nabla \psi)_K = 
\big(\bm{\tau}, \nabla \psi \big)_K
\\
= & \; -\big(\nabla\cdot\bm{\tau}, \psi \big)_K+
(\bm{\tau}\cdot\bm{n}_{\partial K} ,\psi )_{\partial K}
\\
\leq & \;\|\nabla \cdot \bm{\tau}\|_K \| \psi\|_K + 
\sum_{e\subset \partial K} \|\bm{\tau}\cdot\bm{n}_e\|_e\| \psi \|_e
\\
\leq &\; \|\nabla \cdot \bm{\tau}\|_K \| \psi\|_K + \left(\sum_{e\subset \partial K} 
 h_e\|\bm{\tau}\cdot\bm{n}_e\|_e^2\right)^{1/2}
\left(\sum_{e\subset \partial K} h_e^{-1}\|\psi\|_e^2\right)^{1/2}
\end{aligned}
\end{equation}
Proposition \ref{prop:geometry} allows us to apply an isotropic trace inequality 
on an edge of a polygon (Lemma \ref{lem:trace}), combining with the Poincar\'{e} inequality for $H^1(K)/\mathbb{R}$,  we have, on every $e\subset \partial K$,
\[
h_e^{-1/2}\|\psi\|_e \lesssim h_K^{-1} \|\psi\|_K + \|\nabla \psi\|_K
\lesssim \|\nabla \psi\|_K.          
\] 
Furthermore applying the inverse estimate in Lemma \ref{lem:inverse-V} on the bulk term above, we have
\[
\big\Vert{\bm{\tau} }\big\Vert_{K}^2 \lesssim S_K\big(\bm{\tau},\bm{\tau}\big)^{1/2} \|\nabla \psi\|_K,
\]
which proves the validity of \eqref{eq:normeq-K}, thus yield the lower bound.

To prove the upper bound, by $\big\Vert{{\Pi}\bm{\tau}}\big\Vert_{K}\leq 
\Vert{\bm{\tau}}\Vert_{K}$, it suffices to establish the reversed direction of \eqref{eq:normeq-K} on a single edge $e$ and for a single monomial basis $m_{\alpha}\in \mathbb{P}_{k-1}(K)$: 
\begin{equation}
\label{eq:normeq-upper}
h_e\|\bm{\tau}\cdot\bm{n}_e\|_e^2 \lesssim \norm{\bm{\tau}}_K,\quad \text{ and }
\quad |(\bm{\tau}, \nabla m_{\alpha})_K| \leq \norm{\bm{\tau}}_K.
\end{equation}
To prove the first inequality, by Proposition \ref{prop:geometry} again, consider 
the edge bubble function $b_e$ such 
that $\operatorname{supp} b_e = T_e$. We can let $b_e = 0$ on $e'\subset\partial K$ for 
$e'\neq e$. It is easy to verify that:
\begin{equation}
\label{eq:normeq-bubble}
\norm{\nabla b_e}_{\infty,K} = O(1/h_e), \text{ and }
\norm{b_e}_{\infty, K} = O(1).
\end{equation}

Denote $q_e:= \bm{\tau}\cdot\bm{n}_e$, and extend it to $\oversetc{\circ}{K}$ by a constant extension in the normal direction rectangular strip $R_e \subset K$ with respect to $e$ (notice 
$\operatorname{supp} b_e \subset R_e$), we have
\[
\begin{aligned}
\|\bm{\tau}\cdot\bm{n}_e\|_e^2 & \lesssim 
\big(\bm{\tau}\cdot\bm{n}_e, b_e q_e \big)_e = 
x\big(\bm{\tau}\cdot\bm{n}_e, b_e q_e \big)_{\partial K}  
\\
&= \big(\bm{\tau}, q_e\nabla b_e \big)_K
+ \big(\nabla\cdot\bm{\tau}, b_e q_e\big)_K
\\
& \leq \norm{\bm{\tau}}_K  \norm{q_e\nabla b_e}_{T_e}
+ \norm{\nabla\cdot\bm{\tau}}_K \norm{q_e b_e}_{T_e},
\\
& \leq \norm{\bm{\tau}}_K  \norm{q_e}_{T_e} \norm{\nabla b_e}_{\infty,K}
+ \norm{\nabla\cdot\bm{\tau}}_K \norm{q_e}_{T_e} \norm{b_e}_{\infty,K}.
\end{aligned}
\]
Now by the fact that $\norm{q_e}_{T_e} \lesssim h_e^{1/2} \norm{q_e}_e$, the scaling of the edge bubble function in \eqref{eq:normeq-bubble}, and the first inverse estimate of $\norm{\nabla\cdot\bm{\tau}}_K\lesssim h_K^{-1}\norm{\bm{\tau}}_K$ in Lemma \ref{lem:inverse-V} yields the first part of \eqref{eq:normeq-upper}.

The second inequality in \eqref{eq:normeq-upper} can be estimated straightforward by the scaling of the monomials \eqref{eq:monomials-elem}
\begin{equation}
 \left|(\bm{\tau}, \nabla m_{\alpha})_K\right| 
\leq \norm{\bm{\tau}}_K \norm{\nabla m_{\alpha}}_K \leq \norm{\bm{\tau}}_K .
\end{equation}
Hence, \eqref{eq:normeq-V} is proved.
\end{proof}


\begin{thebibliography}{10}
\expandafter\ifx\csname url\endcsname\relax
  \def\url#1{\texttt{#1}}\fi
\expandafter\ifx\csname urlprefix\endcsname\relax\def\urlprefix{URL }\fi
\expandafter\ifx\csname href\endcsname\relax
  \def\href#1#2{#2} \def\path#1{#1}\fi

\bibitem{Demkowicz;Oden;Rachowicz;Hardy:1989Toward}
L.~Demkowicz, J.~T. Oden, W.~Rachowicz, O.~Hardy, Toward a universal hp
  adaptive finite element strategy, part 1. constrained approximation and data
  structure, Computer Methods in Applied Mechanics and Engineering 77~(1-2)
  (1989) 79--112.

\bibitem{mfem}
R.~Anderson, J.~Andrej, A.~Barker, J.~Bramwell, J.-S. Camier, J.~C.~V. Dobrev,
  Y.~Dudouit, A.~Fisher, T.~Kolev, W.~Pazner, M.~Stowell, V.~Tomov,
  I.~Akkerman, J.~Dahm, D.~Medina, S.~Zampini, {MFEM}: A modular finite element
  library, Computers \& Mathematics with Applications 81 (2021) 42--74.
\newblock \href {https://doi.org/10.1016/j.camwa.2020.06.009}
  {\path{doi:10.1016/j.camwa.2020.06.009}}.

\bibitem{Bangerth;Hartmann;Kanschat:2007deal.II}
W.~Bangerth, R.~Hartmann, G.~Kanschat, {deal.II} -- a general purpose object
  oriented finite element library, ACM Trans. Math. Softw. 33~(4) (2007)
  24/1--24/27.

\bibitem{Zienkiewicz;Zhu:1992superconvergentpart1}
O.~C. Zienkiewicz, J.~Z. Zhu, The superconvergent patch recovery and a
  posteriori error estimates. part 1: The recovery technique, International
  Journal for Numerical Methods in Engineering 33~(7) (1992) 1331--1364.

\bibitem{Bank.Xu:2003Asymptotically}
R.~E. Bank, J.~Xu, Asymptotically exact a posteriori error estimators, part ii:
  General unstructured grids, SIAM Journal on Numerical Analysis 41~(6) (2003)
  2313--2332.

\bibitem{Cai;Zhang:2009Recovery-based}
Z.~Cai, S.~Zhang, Recovery-based error estimators for interface problems:
  conforming linear elements, SIAM J. Numer. Anal. 47~(3) (2009) 2132--2156.

\bibitem{Cai;Cao:2015recovery-based}
Z.~Cai, S.~Cao, {A recovery-based a posteriori error estimator for H(curl)
  interface problems}, Comput. Methods in Appl. Mech. Eng. 296~(1 November
  2015) (2015) 169--195.

\bibitem{Ern;Vohralik:2009reconstruction}
A.~Ern, M.~Vohral{\'\i}k, Flux reconstruction and a posteriori error estimation
  for discontinuous galerkin methods on general nonmatching grids, Comptes
  Rendus Mathematique 347~(7-8) (2009) 441--444.

\bibitem{Beirao-da-Veiga;Brezzi;Cangiani;Manzini:2013principles}
L.~Beir{\~a}o~da Veiga, F.~Brezzi, A.~Cangiani, G.~Manzini, L.~Marini,
  A.~Russo, Basic principles of virtual element methods, Mathematical Models
  and Methods in Applied Sciences 23~(01) (2013) 199--214.

\bibitem{Brezzi;Falk;Marini:2014principles}
F.~Brezzi, R.~S. Falk, L.~D. Marini, Basic principles of mixed virtual element
  methods, ESAIM: Mathematical Modelling and Numerical Analysis 48~(4) (2014)
  1227--1240.

\bibitem{Di-Stolfo;Schroder;Zander;Kollmannsberger:2016treatment}
P.~Di~Stolfo, A.~Schr{\"o}der, N.~Zander, S.~Kollmannsberger, An easy treatment
  of hanging nodes in hp-finite elements, Finite Elements in Analysis and
  Design 121 (2016) 101--117.

\bibitem{Solin;Cerveny;Dolezel:2008Arbitrary-level}
P.~{\v{S}}ol{\'\i}n, J.~{\v{C}}erven{\`y}, I.~Dole{\v{z}}el, Arbitrary-level
  hanging nodes and automatic adaptivity in the hp-fem, Mathematics and
  Computers in Simulation 77~(1) (2008) 117--132.

\bibitem{Cerveny;Dobrev;Kolev:2019Nonconforming}
J.~Cerveny, V.~Dobrev, T.~Kolev, Nonconforming mesh refinement for high-order
  finite elements, SIAM Journal on Scientific Computing 41~(4) (2019)
  C367--C392.

\bibitem{Carstensen;Hu:2009hanging}
C.~Carstensen, J.~Hu, Hanging nodes in the unifying theory of a posteriori
  finite element error control, Journal of Computational Mathematics 27~(2-3)
  (2009) 215--236.

\bibitem{Chi;Beirao-da-Veiga;Paulino:2019simple}
H.~Chi, L.~Beir{\~a}o~da Veiga, G.~H. Paulino, A simple and effective gradient
  recovery scheme and a posteriori error estimator for the virtual element
  method (vem), Computer Methods in Applied Mechanics and Engineering 347
  (2019) 21--58.

\bibitem{Guo;Xie;Zhao:2019Superconvergent}
H.~Guo, C.~Xie, R.~Zhao,
  \href{https://doi.org/10.1142/S0218202519500386}{Superconvergent gradient
  recovery for virtual element methods}, Mathematical Models and Methods in
  Applied Sciences 29~(11) (2019) 2007--2031.

\bibitem{Dassi;Gedicke;Mascotto:2021adaptive}
F.~Dassi, J.~Gedicke, L.~Mascotto, Adaptive virtual element methods with
  equilibrated fluxes (2021).
\newblock \href {http://arxiv.org/abs/2004.11220} {\path{arXiv:2004.11220}}.

\bibitem{Beirao-da-Veiga;Brezzi;Marini;Russo:2017Serendipity}
L.~Beir{\~{a}}o~da Veiga, F.~Brezzi, L.~D. Marini, A.~Russo, Serendipity face
  and edge vem spaces, Rendiconti Lincei-Matematica e Applicazioni 28~(1)
  (2017) 143--181.

\bibitem{Chen:2008innovative}
L.~Chen, \href{https://github.com/lyc102/ifem}{{$i$FEM}: an innovative finite
  element methods package in {MATLAB}}, Tech. rep. (2008).
\newline\urlprefix\url{https://github.com/lyc102/ifem}

\bibitem{Bernardi;Verfurth:2000Adaptive}
C.~Bernardi, R.~Verf\"{u}rth, Adaptive finite element methods for elliptic
  equations with non-smooth coefficient, Numerische Mathematik 85~(4) (2000)
  579--608.

\bibitem{Verfurth:1999estimates}
R.~Verf{\"u}rth, Error estimates for some quasi-interpolation operators,
  Mathematical Modelling and Numerical Analysis 33~(4) (1999) 695--713.

\bibitem{Bruce-Kellogg:1974Poisson}
R.~Bruce~Kellogg, On the {P}oisson equation with intersecting interfaces,
  Applicable Analysis 4~(2) (1974) 101--129.

\bibitem{Chen;Dai:2002efficiency}
Z.~Chen, S.~Dai, On the efficiency of adaptive finite element methods for
  elliptic problems with discontinuous coefficients, SIAM Journal on Scientific
  Computing 24~(2) (2002) 443--462.

\bibitem{Cangiani;Georgoulis;Pryer;Sutton:2017posteriori}
A.~Cangiani, E.~H. Georgoulis, T.~Pryer, O.~J. Sutton, A posteriori error
  estimates for the virtual element method, Numerische Mathematik 137~(4)
  (2017) 857--893.

\bibitem{Cao;Chen:2019Anisotropic}
S.~Cao, L.~Chen, Anisotropic error estimates of the linear nonconforming
  virtual element methods, SIAM Journal on Numerical Analysis 57~(3) (2019)
  1058--1081.

\bibitem{Mascotto:2017Ill-conditioning}
L.~Mascotto,  Ill-conditioning
  in the virtual element method: Stabilizations and bases, Numerical Methods
  for Partial Differential Equations 34~(4) (2018) 1258--1281.

\bibitem{Berrone;Borio:2017Orthogonal}
S.~Berrone, A.~Borio, Orthogonal polynomials in badly shaped polygonal elements
  for the virtual element method, Finite Elements in Analysis and Design 129
  (2017) 14--31.

\bibitem{Girault;Raviart:1986Finite}
V.~Girault, P.-A. Raviart, Finite Element Methods for Navier-Stokes Equations:
  Theory and Algorithms, Springer-Verlag, 1986.

\end{thebibliography}

\end{document}